\documentclass[10pt,twoside]{article}
\usepackage{mathrsfs}
\usepackage{amsmath}
\usepackage{amssymb}
\usepackage{fancyhdr}
\usepackage{latexsym}
\usepackage{bbding}
\usepackage{mathrsfs}
\usepackage{wasysym}
\usepackage{multicol,graphics}
\usepackage{cite}

\newtheorem{theorem}{Theorem}[section]
\newtheorem{lemma}[theorem]{Lemma}
\newtheorem{corollary}[theorem]{Corollary}

\newtheorem{definition}[theorem]{Definition}

\newtheorem{proposition}[theorem]{Proposition}
\numberwithin{equation}{section}
\newenvironment{proof}[1][Proof]{\noindent\textbf{#1.} }{\hfill $\Box$}
\allowdisplaybreaks

\makeatletter
\begin{document}
\title{{Global large solutions of the Cauchy problem for the NS-NPP equations with Fujita-Kato type initial data}\footnote{Email address: jihzhao@163.com (J. Zhao), \; chaodeng03@163.com (C. Deng)}}
\author{Jihong Zhao\\
[0.2cm] {\small School of Mathematics and Information Science, Baoji University of Arts and Sciences,}\\
[0.2cm] {\small  Baoji, Shaanxi 721013,  China} \\  \\
Chao Deng$^\dag$\\
[0.2cm] {\small School of Mathematics and Statistics, Jiangsu Normal University,}\\
[0.2cm] {\small  Xuzhou,  Jiangsu 221116,  China} }
\date{}
\maketitle

\begin{abstract}
In this paper, we prove the global well-posedness of the Cauchy problem for the NS-NPP equations with some large Fujita-Kato type initial data. Specifically, we show that there
exist two positive constants $c_{0}$ and $C_{0}$ such that if the initial data $(u_{0}, N_{0}, P_{0})$ satisfies the following condition:
\begin{equation*}
   \left(\|u_0\|_{\dot{H}^{-1+\frac{d}{2}}}+\|N_{0}-P_{0}\|_{\dot{H}^{-2+\frac{d}{2}}}\right)\exp\left\{C_{0}\big(\|N_{0}+P_{0}\|_{\dot{H}^{-2+\frac{d}{2}}}^2+1\big)\right\}
  \leq
  c_{0},
\end{equation*}
then the NS-NPP equations admits a unique global solution. This result implies global existence of solutions without any smallness conditions imposed on the sum of initial particle densities of negative and positive electric charge in the framework of Sobolev spaces.

\textbf{Keywords}: NS-NPP equations; large solutions; global existence; Sobolev spaces.

\textbf{2020 AMS Subject Classification}: 35K15, 35K55, 35Q35, 76A05
\end{abstract}

\section{Introduction}
In this paper, we study the Cauchy problem of the following incompressible Navier-Stokes-Nernst-Planck-Poisson (NS-NPP) equations arising from electro-hydrodynamics (cf. \cite{R90, S09}):
\begin{equation}\label{NSNPP}
\begin{cases}
  \partial_{t} u+u\cdot\nabla u-\Delta
  u+\nabla \pi=\Delta
  \phi\nabla\phi,\\
  \nabla\cdot u=0,\\
  \partial_{t} N+u\cdot \nabla
  N=\nabla\cdot(\nabla N-N\nabla \phi),\\
  \partial_{t} P+u\cdot \nabla
  P=\nabla\cdot(\nabla P+P\nabla \phi),\\
 -\Delta \phi=P-N,\\
 u(x,0)=u_0(x), N(x,0)=N_0(x), P(x,0)=P_0(x),
\end{cases}
\end{equation}
where the unknowns $u$, $\pi$, $N (P)$  and $\phi$ stand for the fluid velocity, the pressure, the particle density of  negative (positive) electric charge and the electrostatic potential, respectively. Without loss of generality,  we have assumed that the fluid density, viscosity, charge mobility and dielectric constant are unity.
\smallbreak

Notice that the first two equations of \eqref{NSNPP} are the momentum
conservation and the mass conservation equations of the
incompressible flow with an external force $\Delta
  \phi\nabla\phi$. In the case that the flow is charge-free, the
system \eqref{NSNPP} reduces into the well-known incompressible Navier-Stokes (NS) equations:
\begin{equation}\label{NS}
\begin{cases}
 \partial_{t} u-\Delta u+u\cdot\nabla u+\nabla \pi=0,\\
  \nabla\cdot u=0,
\\
  u(x,0)=u_0(x).
\end{cases}
\end{equation}
It is easy to see that if the pair $(u(x,t), \pi(x,t))$ solves the NS equations \eqref{NS},  then for all $\lambda>0$, the pair $(u_{\lambda}(x,t), \pi_{\lambda}(x,t))$ with
$$
  u_{\lambda}(x,t):=\lambda u(\lambda x, \lambda^2t),\ \
  \pi_{\lambda}(x,t):=\lambda^2 \pi(\lambda x, \lambda^2t)
$$
is also a solution to the NS equations \eqref{NS} with the initial data
\begin{equation}\label{scaling invariance of NS}
 u_{0,\lambda}(x):=\lambda u_0(\lambda x).
\end{equation}
The spaces which are invariant under the scaling \eqref{scaling invariance of NS} are called critical spaces for the NS equations \eqref{NS}. Examples of critical spaces for the NS equations are \begin{equation*}
 \dot{H}^{-1+\frac{d}{2}}\hookrightarrow L^d\hookrightarrow\dot{B}^{-1+\frac{d}{p}}_{p,r}\hookrightarrow BMO^{-1}\hookrightarrow \dot{B}^{-1}_{\infty,\infty}\ \ \text{with}
 \ \ 2 \le d <p<\infty,\ \ 2\leq r\leq \infty.
\end{equation*}
 The well-posed result in $\dot{H}^{-1+\frac{d}{2}}$ is due to  Fujita-Kato \cite{FK64} (for the 3-dimensional case see also \cite{L34}, where the smallness of $u_{0}$ is measured by $\|u_{0}\|_{L^2}\|\nabla u_{0}\|_{L^2}$). Since then, a number of works have been devoted to establishing similar well-posed results with initial data in critical spaces. Kato \cite{K84} proved the global well-posedness for small
initial data and local well-posedness for large initial data in the critical Lebesgue space $L^{d}$ (see also \cite{G86}). Later,  Cannone-Planchon \cite{C97, CP96} proved the same results for initial data in the critical Besov spaces $\dot{B}^{-1+\frac{d}{p}}_{p,\infty}$ (see also the different proof by Chemin \cite{C92}), and Koch-Tataru \cite{KT01} for initial data in $BMO^{-1}$. For the largest scaling invariant space $\dot{B}^{-1}_{\infty,\infty}$,  Bourgain-Pavlovi\'{c} \cite{BP08} proved ill-posedness for the NS equations \eqref{NS}. We refer the readers to see Yoneda \cite{Y10} and Wang \cite{W15} for more ill-posedness results.
\smallbreak

On the other hand, the third and the fourth equations of  \eqref{NSNPP} are the Nernst-Planck equations modified by the convective terms $u\cdot \nabla N$ and $u\cdot \nabla P$, and they are coupled by the Poisson equation (the fifth equation of  \eqref{NSNPP}). In the absence of the fluid, the equations \eqref{NSNPP} reduces to the Nernst-Planck-Poisson (NPP) equations in semi-conductor devices:
\begin{equation}\label{NPP}
\begin{cases}
  \partial_{t}N-\Delta N=-\nabla\cdot(N\nabla \phi),\\
   \partial_{t}P-\Delta P=\nabla\cdot(P\nabla \phi),\\
  -\Delta \phi=P-N,\\
  N(x,0)=N_0(x), \ \ P(x,0)=P_0(x).
\end{cases}
\end{equation}
The NPP equations \eqref{NPP} is a basic model for the diffusion of ions in an electrolyte  (cf. \cite{DH23}). It is easy to see that if the pair $(N(x,t), P(x,t),\phi(x,t))$ solves the NPP equations \eqref{NPP},  then for all $\lambda>0$, the pair $(N_{\lambda}(x,t), P_{\lambda}(x,t),\phi_{\lambda}(x,t))$ with
$$
  N_{\lambda}(x,t):=\lambda^2 N(\lambda x, \lambda^2t),\ \
  P_{\lambda}(x,t):=\lambda^2 P(\lambda x, \lambda^2t), \ \
  \phi_{\lambda}(x,t):=\phi(\lambda x, \lambda^2t)
$$
is also a solution to the NPP equations \eqref{NPP} with the initial data
\begin{equation}\label{scaling invariance of NPP}
  N_{0,\lambda}(x):=\lambda^2 N_0(\lambda x),\ \
  P_{0,\lambda}(x):=\lambda^2 P_0(\lambda x).
\end{equation}
The spaces which are invariant under the scaling \eqref{scaling invariance of NPP} are called critical spaces for the NPP equations \eqref{NPP}. Examples of critical spaces are the Hardy space $\mathcal{H}^{1}$ in 2-dimensional case, and the Sobolev space $\dot{H}^{-2+\frac{d}{2}}$, the Lebesgue space $L^\frac{d}{2}$ and the Besov space $\dot{B}^{-2+\frac{d}{p}}_{p,r}$ in general d-dimensional case. The study of the NPP equations \eqref{NPP} in critical spaces was also well-developed.  The result in $ L^{\frac{d}{2}}$ is due to Kurokiba-Ogawa \cite{KO08}. Ogawa-Shimizu \cite{OS08, OS10} established global existence of solutions for 2-dimensional equations \eqref{NPP} with small initial data in critical Hardy space $\mathcal{H}^{1}$ and the homogeneous Besov space $\dot{B}^{0}_{1,2}$, respectively. Karch \cite{K99} proved global well-posedness for small initial data in critical Besov space $\dot{B}^{-2+\frac{d}{p}}_{p,\infty}$ for $\frac{d}{2}<p<d$,  and Zhao-Liu-Cui \cite{ZLC13} further extended to $\dot{B}^{-2+\frac{d}{p}}_{p, r}$ for $1<p<2d$ and $1\leq r\leq \infty$.  Deng-Li \cite{DL13} proved that, in the 2-dimensional case,  the equations \eqref{NPP} is well-posed in critical space  $\dot{B}^{-\frac{3}{2}}_{4,2}$ and ill-posed in critical space $\dot{B}^{-\frac{3}{2}}_{4,q}$ for $2<q\leq \infty$, and this ill-posedness result was further extended to the general d-dimensional case by  Iwabuchi-Ogawa \cite{IO16}, where they proved that the equations \eqref{NPP} is ill-posed in  $\dot{B}^{-2+\frac{d}{p}}_{p,q}$ with $2d<p\leq \infty$ and $1\leq q\leq \infty$, or $p=2d$ and $2<q\leq \infty$.
\smallbreak

Recently, in \cite{ZL23},  the first author of this paper and Liu proved that there exist two constants $c_{0}$ and $C_{0}$ such that if the initial data $(N_{0}, P_{0})\in \dot{B}^{-2+\frac{d}{p}}_{p,1}$ and satisfies the following condition:
\begin{equation}\label{initial condition of npp}
 \|N_{0}-P_{0}\|_{\dot{B}^{-2+\frac{d}{p}}_{p,1}}\exp\{C_{0}\|N_{0}+P_{0}\|_{\dot{B}^{-2+\frac{d}{p}}_{p,1}}\}\leq c_{0},
\end{equation}
then the equations \eqref{NPP} admits a unique global solution. The initial condition \eqref{initial condition of npp} exhibits that we can obtain global existence of solutions for the equations \eqref{NPP} with
 only requiring the difference of initial particle densities of negative and positive electric charge is small enough, which reveals that the difference
of particle densities of negative and positive electric charge plays more important role in mathematical analysis of \eqref{NPP}. Since the space $\dot{B}^{-2+\frac{d}{2}}_{2,1}$ cannot contain the usual Sobolev space $\dot{H}^{-2+\frac{d}{2}}$, a natural question is that
\smallbreak

\textit{whether or not the global existence result in \cite{ZL23} holds in the Sobolev space $\dot{H}^{-2+\frac{d}{2}}$?}
\smallbreak

Back to the system \eqref{NSNPP}, to the best of our knowledge, mathematical analysis of system \eqref{NSNPP} was initially studied by Jerome \cite{J02},
where a local existence-uniqueness theory, based upon Kato's semigroup
framework for evolution equations,  was established. Notice that the right-hand side term of the momentum equations has a symmetric structure as
\begin{equation}\label{symmetric structure of phi}
 \Delta\phi\nabla\phi=\nabla\cdot\big(\nabla\phi\otimes\nabla\phi-\frac{1}{2}|\nabla\phi|^{2}I\big),
\end{equation}
one  can regard that the gradient of electrostatic potential plays the same role as the velocity field. Based on the observation \eqref{symmetric structure of phi}, by using the $L^p-L^q$ decay estimates of heat operator and the Hardy-Littlewood-Sobolev inequality, Zhao-Deng-Cui \cite{ZDC10, ZDC11} proved global well-posedness of \eqref{NSNPP}  with small initial
data in the critical Lebesgue spaces and Besov spaces.
\smallbreak

Notice that if we denote $v:=N-P$, $w:=N+P$, then we can transform \eqref{NSNPP} into the following equations with some symmetric structure:
\begin{equation}\label{tranformed NSNPP}
\begin{cases}
  \partial_{t} u+u\cdot\nabla u-\Delta
  u+\nabla \pi=-v\nabla(-\Delta)^{-1}v,\\
  \nabla\cdot u=0,\\
  \partial_{t} v+u\cdot \nabla
  v=\nabla\cdot(\nabla v+w\nabla(-\Delta)^{-1}v),\\
  \partial_{t} w+u\cdot \nabla
  w=\nabla\cdot(\nabla w+v\nabla(-\Delta)^{-1}v),\\
  u(x,0)=u_0(x), v(x,0)=v_0(x), w(x,0)=w_0(x),
\end{cases}
\end{equation}
where $v_{0}(x):=N_{0}(x)-P_{0}(x)$ and $w_{0}(x):=N_{0}(x)+P_{0}(x)$.  It is easy to find that the fourth equation of \eqref{tranformed NSNPP} is a linear equation for $w$ with coefficients depending on  $u$ and $v$, which may suggest us that we do not need to impose any smallness condition on initial data $w_{0}$ to ensure global existence of solutions. Indeed, Ma \cite{M18} proved that for some positive constants $1\leq  p<\infty$, $1\leq q<2d$, $q\leq 2p$ and
$\frac{1}{p}-\frac{1}{q}<\frac{1}{d}<\frac{1}{p}+\frac{1}{q}$, there exist two
positive constants $c_{0}$ and $C_{0}$ such that if
the initial data $(u_{0},v_{0}, w_{0})$ satisfies
\begin{equation}\label{large initial condition of Ma}
  \Big(\|u_{0}\|_{\dot{B}^{-1+\frac{d}{p}}_{p,1}}+\|v_{0}\|_{\dot{B}^{-2+\frac{d}{q}}_{q,1}}
  \Big)
  \exp\Big\{C_{0}\|w_{0}\|_{\dot{B}^{-2+\frac{d}{q}}_{q,1}}\Big\}\leq
  c_{0},
\end{equation}
then the equations \eqref{tranformed NSNPP} has a unique global solution. The condition  \eqref{large initial condition of Ma} implies global existence of solutions with only requiring  $u_{0}$ and $v_0$ are small enough (compared with $w_{0}$), which reveals that the velocity field  and  the
difference of electric charge densities play more important roles than the summation of electric charge densities in the mathematical analysis of \eqref{NSNPP}. Notice that $\dot{B}^{s}_{p,1}$ cannot contain the usual Sobolev space $\dot{H}^{s}$, similar question appeared that
\smallbreak

\textit{whether or not the global existence result in \cite{M18} holds in the Sobolev space $\dot{H}^{-1+\frac{d}{2}}\times (\dot{H}^{-2+\frac{d}{2}})^2$?}
\smallbreak

In this paper, we shall give the positive answers to the above two questions. Actually, we shall prove the global well-posedness of NS-NPP equations \eqref{NSNPP} in the critical homogeneous Besov spaces $\dot{B}^{-1+\frac{d}{p}}_{p,2}\times (\dot{B}^{-2+\frac{d}{q}}_{q,2})^2$, and the well-posed result in Sobolev spaces can be regarded as a special case in our main result. Before giving our main result, let us recall the definitions of the homogeneous Besov spaces and the Chemin-Lerner spaces (cf. \cite{BCD11, CL95}).  We denote the Schwartz class of rapidly
decreasing function by $\mathcal{S}$ and the space of tempered  distributions by $\mathcal{S}'$.

\begin{definition}\label{homo besov space}
 Let $\varphi\in\mathcal{S}$ be such that $\widehat{\varphi}(\xi)=1$ for $|\xi|\leq 1$ and $\widehat{\varphi}(\xi)=0$ for $|\xi|>2$. Denote, for
 $j\in \mathbb{Z}$, the function $\varphi_j(x):=2^{dj}\varphi(2^jx)$, we define the Littlewood-Paley operators $S_j f=\varphi_j\ast f$ and $\Delta_j f=S_{j+1}f-S_j f$. Let $f\in\mathcal{S}'$. Then $f$ belongs to the homogeneous Besov spaces $\dot{B}^{s}_{p,r}$  if and only if\
 \begin{itemize}
  \item The partial sum $\sum_{j=-m}^{j=m}\Delta_j f$ converges towards $f$ as a tempered distribution;

 \item The sequence $\{2^{js}\|\Delta_{j}f\|_{L^{p}}\}\in \ell^r$.
 \end{itemize}
 In that case, we define the norm as
\begin{equation*}
  \|f\|_{\dot{B}^{s}_{p,r}}:= \begin{cases} \big(\sum_{j\in\mathbb{Z}}2^{srj}\|\Delta_{j}f\|_{L^{p}}^{r}\big)^{\frac{1}{r}}
  \ \ &\text{for}\ \ 1\leq r<\infty,\\
  \sup_{j\in\mathbb{Z}}2^{sj}\|\Delta_{j}f\|_{L^{p}}\ \
  &\text{for}\ \
  r=\infty.
 \end{cases}
\end{equation*}
\end{definition}

\begin{definition}\label{chemin-lerner space}
 For $0<T\leq\infty$, $s\in \mathbb{R}$ and
$1\leq p, r, \rho\leq\infty$. We define the Chemin-Lerner space $ \widetilde{L}^{\rho}(0,T; \dot{B}^{s}_{p,r})$
as the completion of $\mathcal{C}([0,T], \mathcal{S})$ by the norm
$$
  \|f\|_{ \widetilde{L}^{\rho}_{T}(\dot{B}^{s}_{p,r})}:=\Big(\sum_{j\in\mathbb{Z}}2^{srj}\big(\int_{0}^{T}
  \|\Delta_{j}f(\cdot,t)\|_{L^{p}}^{\rho}dt\big)^{\frac{r}{\rho}}\Big)^{\frac{1}{r}}
$$
 with the standard modification for $\rho=\infty$ or $r=\infty$.
\end{definition}

Now we state our main result as  follows.

\begin{theorem}\label{main result in of NSNPP}
Let $d\geq 3$, $p$ and $q$ be two positive numbers such that $1\leq p, q<d$, $q\leq 2p$ and
$\frac{1}{p}-\frac{1}{q}< \frac{1}{d}$.
 There exist two constants $c_{0}$ and $C_{0}$ such that if the initial data $u_{0}\in
\dot{B}^{-1+\frac{d}{p}}_{p,2}$ with $\nabla\cdot
u_{0}=0$,  $N_{0}, P_{0}\in
\dot{B}^{-2+\frac{d}{q}}_{q,2}$, and  satisfy the condition:
\begin{equation}\label{initial condition in Besov space}
  \Big(\|u_{0}\|_{\dot{B}^{-1+\frac{d}{p}}_{p,2}}+\|N_{0}-P_{0}\|_{\dot{B}^{-2+\frac{d}{q}}_{q,2}}\Big)\exp\Big\{C_{0}\big(\|N_{0}+P_{0}\|_{\dot{B}^{-2+\frac{d}{q}}_{q,2}}^{2}+1\big)\Big\}\leq c_{0},
\end{equation}
then the NS-NPP equations \eqref{NSNPP} admits a unique global
solution $(u, N, P)$  satisfying
\begin{align*}
  u\in  \widetilde{L}^{\infty}(0, \infty;
  \dot{B}^{-1+\frac{d}{p}}_{p,2})\cap \widetilde{L}^{2}(0, \infty;
  \dot{B}^{\frac{d}{p}}_{p,2}),\ \ \ \
  N,P\in \widetilde{L}^{\infty}(0, \infty;
  \dot{B}^{-2+\frac{d}{q}}_{q,2})\cap \widetilde{L}^{2}(0, \infty;
  \dot{B}^{-1+\frac{d}{q}}_{q,2}).
\end{align*}
\end{theorem}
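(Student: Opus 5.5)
We work with the transformed system \eqref{tranformed NSNPP} in the unknowns $(u,v,w)$, where $v=N-P$ and $w=N+P$. The idea is to split $w$ into a large but purely linear part and a remainder, and to run a weighted (exponential) energy argument that absorbs the large part. Concretely, we write $w=w_L+\bar w$ with $w_L:=e^{t\Delta}w_0$. The linear heat estimate in Chemin--Lerner spaces gives
\[
\|w_L\|_{\widetilde L^\infty(\dot B^{-2+d/q}_{q,2})}+\|w_L\|_{\widetilde L^2(\dot B^{-1+d/q}_{q,2})}\le C\|w_0\|_{\dot B^{-2+d/q}_{q,2}}.
\]
Only the quadratic quantity $\int_0^\infty\|w_L\|^2_{\dot B^{-1+d/q}_{q,2}}dt$ enters, and this explains the square in the exponent of \eqref{initial condition in Besov space}. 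Note that with $r=2$ the space $\widetilde L^2(\dot B^{s}_{q,2})$ coincides with $L^2(\dot B^s_{q,2})$, which is why we use index $2$ rather than $1$.

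\textbf{Step 1: weighted estimates.} We introduce the weight $f(t):=\|w_L(t)\|^2_{\dot B^{-1+d/q}_{q,2}}$ (possibly also $\|u\|^2_{\dot B^{d/p}_{p,2}}$ is not needed). We set $(u_\lambda,v_\lambda)=\exp(-\lambda\int_0^t f)\,(u,v)$; these satisfy the same equations with an extra damping term $\lambda f(t)(u_\lambda,v_\lambda)$. Localizing by $\Delta_j$ and using the $L^q$-energy estimate of the localized heat equation (with a commutator for $u\cdot\nabla$), we get
\[
\|\Delta_j v_\lambda\|_{L^\infty_t L^q}+2^{2j}\|\Delta_j v_\lambda\|_{L^1_tL^q}+\lambda\int_0^t f\|\Delta_jv_\lambda\|_{L^q}\lesssim\|\Delta_jv_0\|_{L^q}+\cdots.
\]
This is taken in the $\ell^2$ / $L^2$-in-time form. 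The dangerous term is the linear one, $\nabla\cdot(w_L\nabla(-\Delta)^{-1}v_\lambda)$. We bound it with paraproduct estimates: under $q<d$, $q\le 2p$ and $1/p-1/q<1/d$ the products $\dot B^{-1+d/q}_{q,2}\times\dot B^{-1+d/q}_{q,2}\to\dot B^{-2+d/q}_{q,2}$ (after the divergence) are controlled. Its contribution becomes $\int f^{1/2}\|v_\lambda\|_{\dot B^{-1+d/q}}\cdot(\ldots)$. By Cauchy--Schwarz in time and Young's inequality, this is at most $\frac{1}{2}\|v_\lambda\|^2_{\widetilde L^2(\dot B^{-1+d/q})}+C\int f\|v_\lambda\|^2_{\dot B^{-2+d/q}}$, and the last piece is absorbed by the damping once $\lambda$ is large. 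The terms $u\cdot\nabla v$, $v\nabla(-\Delta)^{-1}v$, $u\cdot\nabla u$ and $\nabla\cdot(\bar w\nabla(-\Delta)^{-1}v)$ are quadratic and are bounded by $X(t)^2$-type quantities. Here $X$ is the sum of the $\widetilde L^\infty$ and $\widetilde L^2$ norms of $u_\lambda,v_\lambda$ (and of $\bar w$), times exponential factors $e^{\lambda\int f}\le e^{C\lambda\|w_0\|^2}$ arising when we undo the weight in one factor.

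\textbf{Step 2: the remainder $\bar w$.} The remainder satisfies
\[
\partial_t\bar w-\Delta\bar w=-u\cdot\nabla w+\nabla\cdot(v\nabla(-\Delta)^{-1}v),
\]
with zero data. Its size is controlled by $\|u\|(\|w_L\|+\|\bar w\|)+\|v\|^2$, which is small, so it remains small. It need not be weighted, but we estimate it together with the unweighted norms of $(u,v)$.

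\textbf{Step 3: closing.} We combine Steps 1 and 2 into an inequality of the form
\[
Y\le C\big(\|u_0\|+\|v_0\|\big)+C e^{C(\|w_0\|^2+1)}Y^2 ,
\]
where $Y$ denotes the unweighted norms of $(u,v)$ and $\bar w$. Under \eqref{initial condition in Besov space}, a continuity argument (or a contraction on the Duhamel formulation) gives a global bound. Local existence follows from standard Chemin--Lerner fixed-point theory. Uniqueness follows by estimating the difference of two solutions in the same spaces, or in a space with one lower regularity index, with the same weight handling the linear $w$ terms.

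\textbf{Main obstacle.} The main difficulty is Step 1, namely the product estimate for $w_L\nabla(-\Delta)^{-1}v$ and $u\cdot\nabla w_L$ in $\ell^2$-based Besov spaces. With $r=2$ there is no $L^\infty$-in-time control of $\dot B^{d/q-1}$ via $\ell^1$ summation, so we must rely on $L^2_t$ norms and the Cauchy--Schwarz and damping absorption trick. We also need to verify that the paraproduct remainder terms converge under the stated index conditions, which is where $q<d$ and $1/p-1/q<1/d$ are used.
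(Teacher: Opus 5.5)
Your outline is correct, but it reaches the result by a different route from the paper in two respects.

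\textbf{Splitting of $w$.} The paper does not split $w$. It runs a contradiction argument on $T_\eta$, the last time at which the $u$- and $v$-norms stay below $\eta$. On that interval it first gets the a priori bound $\|w\|_{\widetilde{L}^{\infty}_{t}(\dot{B}^{-2+\frac{d}{q}}_{q,2})}+\|w\|_{\widetilde{L}^{2}_{t}(\dot{B}^{-1+\frac{d}{q}}_{q,2})}\le 2C\|w_0\|_{\dot{B}^{-2+\frac{d}{q}}_{q,2}}+\eta$ directly from the $w$-equation. That equation is linear in $w$, with small coefficient $u$ and small source $\nabla\cdot(v\nabla(-\Delta)^{-1}v)$. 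The paper then weights by the full $w$, taking $f(t)=\|w(t)\|_{\dot{B}^{-1+\frac{d}{q}}_{q,2}}$ and the factor $e^{-\lambda\int_0^t f^2}$.

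Your weight $\|e^{t\Delta}w_0\|^2_{\dot{B}^{-1+\frac{d}{q}}_{q,2}}$ has the advantage of being explicit, with $\int_0^\infty f\le C\|w_0\|^2_{\dot{B}^{-2+\frac{d}{q}}_{q,2}}$ unconditionally. This suits a single global continuity or fixed-point argument. The cost is that you must carry $\bar w$ and the linear term $u\cdot\nabla e^{t\Delta}w_0$. Its large coefficient $\|w_0\|$ is harmless only because $u$ is exponentially small.

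\textbf{Extraction of the damping.} The paper does not use an energy identity; it works from the Duhamel formula. It localizes, multiplies by $\sqrt{\lambda}f(t)$, and applies Cauchy--Schwarz in $s$. It then uses Fubini in $(s,t)$ together with $\int_s^T\lambda f^2(t)e^{-2\lambda\int_s^t f^2}dt\le\frac12$. This yields $\sqrt{\lambda}\|v_{\lambda,f}\|_{\widetilde{L}^{2}_{t,f}(\dot{B}^{-2+\frac{d}{q}}_{q,2})}\le\|v_0\|_{\dot{B}^{-2+\frac{d}{q}}_{q,2}}+C\|F\|_{\widetilde{L}^{2}_{t}(\dot{B}^{-3+\frac{d}{q}}_{q,2})}$. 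The pointwise bound $\|w\nabla(-\Delta)^{-1}v_{\lambda,f}\|_{\dot{B}^{-2+\frac{d}{q}}_{q,2}}\le Cf\|v_{\lambda,f}\|_{\dot{B}^{-2+\frac{d}{q}}_{q,2}}$ is then absorbed by taking $\sqrt{\lambda}>C$.

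Both routes rest on the fact you identified. With $r=2$ and $L^2$ in time, the weighted Chemin--Lerner norm equals $(\int f^2\|v\|^2_{\dot{B}^{-2+\frac{d}{q}}_{q,2}})^{1/2}$, so a pointwise-in-time product bound can be inserted. This is also why your displayed $L^1$-in-time per-frequency inequality cannot be used as written; only its squared $L^2$ form closes.

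The Duhamel route has two practical advantages:
\begin{itemize}
\item It works for every $1\le q<d$. The localized $L^q$ energy inequality you invoke is standard only for $1<q<\infty$, so the endpoint $q=1$, which the theorem allows, should be handled the paper's way.
\item It gives $\widetilde{L}^{\infty}_t$ norms directly. Your energy argument gives only $L^\infty_t(\dot{B}^{-2+\frac{d}{q}}_{q,2})$. You would upgrade this by reapplying the heat estimate with the factor $e^{-\lambda\int_s^t f}\le 1$ dropped, as the paper does in its first $v$-estimate.
\end{itemize}

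\textbf{A minor slip.} If $Y$ denotes the unweighted norms, the linear term in your closing inequality must carry the factor $e^{C\lambda\|w_0\|^2}$, and the bound for $\bar w$ carries a factor $\|w_0\|$. Both are harmless under the exponential smallness hypothesis.
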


The main novelty of the present paper is that, to the best of our knowledge, we employ for the first time an $L^2$-type time weighted norm, in contrast to previous related studies which mostly rely on the classical $L^1$-type time weighted norm, to overcome the difficulties arising from the nonlinear terms involving $w$. To be specific, we define this new time weighted $L^2$ norm as follows: for  $s\in\mathbb{R}$, $p\in[1,\infty]$,  $f(t)\in L^{2}_{\operatorname{loc}}(0,+\infty)$, $f(t)\ge0$, define
\begin{equation*}
  \|u\|_{\widetilde{L}^{2}_{t,f}(\dot{B}^{s}_{p,2})}:=\Big(\sum_{j\in\mathbb{Z}}2^{2sj}
 \int_{0}^{t}f^{2}(\tau)\|\Delta_{j}u(\tau)\|_{L^{p}}^{2}d\tau\Big)^{\frac{1}{2}}.
\end{equation*}
 This modification enables us to provide an affirmative answer to the aforementioned questions regarding the global existence of solutions in Sobolev spaces.
 Moreover,  since the fourth equation of \eqref{tranformed NSNPP}  is a linear equation for $w$, we set
$$
 u_{\lambda,f}(x,t):=u(x,t)\exp\Big\{-\lambda\int_{0}^{t}f^{2}(\tau)d\tau\Big\}
$$
with the weighted function $ f(t):=\|w(\cdot,t)\|_{\dot{B}^{-1+\frac{d}{q}}_{q,2}}$ to eliminate the difficulties caused by the nonlinear term $\nabla\cdot(w\nabla (-\Delta)^{-1}v)$.
Such weighted function is totally different from that one in \cite{ZZL15, ZL23, ZL24}, and we can derive the desired weighted linear and bilinear estimates by localizing the weighted equations for $u_{\lambda,f}$ and $v_{\lambda,f}$ and using the Fubini's theorem to yield global existence of solutions under the initial condition \eqref{initial condition in Besov space} by the contradiction argument.
\smallbreak

As a corollary, taking $p=q=2$ in Theorem \ref{main result in of NSNPP} and using the fact $\dot{B}^{s}_{2,2}=\dot{H}^s$,  we obtain the following global well-posedness of the NS-NPP equations \eqref{NSNPP} in the framework of Sobolev spaces.

\begin{corollary}\label{Sobolev result of NSNPP}
There exist two constants $c_{0}$ and $C_{0}$ such that if the initial data $u_{0}\in
\dot{H}^{-1+\frac{d}{2}}$ with $\nabla\cdot
u_{0}=0$, $N_{0}, P_{0}\in
\dot{H}^{-2+\frac{d}{2}}$, and  satisfy the condition
\begin{equation}\label{initial condition in Sobolev space for NSNPP}
   \left(\|u_0\|_{\dot{H}^{-1+\frac{d}{2}}}+\|N_{0}-P_{0}\|_{\dot{H}^{-2+\frac{d}{2}}}\right)\exp\left\{C_{0}\big(\|N_{0}+P_{0}\|_{\dot{H}^{-2+\frac{d}{2}}}^2+1\big)\right\}
  \leq
  c_{0},
\end{equation}
then the NS-NPP equations \eqref{NSNPP} admits a unique global
solution $(u, N, P)$  satisfying
\begin{align*}
  u\in\widetilde{L}^{\infty}(0, \infty;
  \dot{H}^{-1+\frac{d}{2}})\cap \widetilde{L}^{2}(0, \infty;
  \dot{H}^{\frac{d}{2}}),\ \ \ \
  N,P\in  \widetilde{L}^{\infty}(0, \infty;
  \dot{H}^{-2+\frac{d}{2}})\cap \widetilde{L}^{2}(0, \infty;
 \dot{H}^{-1+\frac{d}{2}}).
\end{align*}
\end{corollary}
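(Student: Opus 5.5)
This follows from Theorem \ref{main result in of NSNPP} by specializing the exponents. I would take $p=q=2$ and first check the hypotheses. We need $1\le p,q<d$, which holds because $d\ge3>2$. We need $q\le 2p$, i.e.\ $2\le 4$. We need $\frac1p-\frac1q=0<\frac1d$. All of these are immediate. The standing assumption $d\geq 3$ of the theorem is inherited; as in the corollary's context, the dimension is understood to satisfy $d\ge3$.

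Next I would invoke the Plancherel identification $\dot{B}^{s}_{2,2}=\dot{H}^{s}$ with equivalent norms for every $s\in\mathbb{R}$. This holds because $\sum_j 2^{2sj}\|\Delta_j f\|_{L^2}^2\simeq \int |\xi|^{2s}|\widehat f(\xi)|^2\,d\xi$, since the dyadic annuli have bounded overlap. Let $K\ge1$ be the equivalence constant, uniform over the finitely many indices $s=-1+\frac d2,\ -2+\frac d2,\ \frac d2$ that appear. Then $\|u_0\|_{\dot B^{-1+d/2}_{2,2}}\le K\|u_0\|_{\dot H^{-1+d/2}}$, and the same bound holds for $N_0-P_0$ and $N_0+P_0$ in $\dot H^{-2+d/2}$.

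Let $(c_0',C_0')$ be the constants given by Theorem \ref{main result in of NSNPP}. I would choose $C_0:=K^2C_0'$ and $c_0:=c_0'/K$. Then condition \eqref{initial condition in Sobolev space for NSNPP} gives
\[
\Big(\|u_0\|_{\dot B^{-1+\frac d2}_{2,2}}+\|N_0-P_0\|_{\dot B^{-2+\frac d2}_{2,2}}\Big)\exp\Big\{C_0'\big(\|N_0+P_0\|^2_{\dot B^{-2+\frac d2}_{2,2}}+1\big)\Big\}\le K\,c_0=c_0'.
\]
Here I used $C_0'(K^2\|\cdot\|_{\dot H}^2+1)\le K^2C_0'(\|\cdot\|_{\dot H}^2+1)$. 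Hence \eqref{initial condition in Besov space} holds, and the theorem yields a unique global solution. Its regularity classes $\widetilde L^\rho(0,\infty;\dot B^{s}_{2,2})$ are exactly $\widetilde L^\rho(0,\infty;\dot H^{s})$, again by the norm equivalence applied blockwise in Definition \ref{chemin-lerner space}.

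There is no genuine obstacle here. The only point requiring care is tracking the norm-equivalence constants into $c_0$ and $C_0$, which the rescaling above handles.
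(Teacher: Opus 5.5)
Your proposal is correct and follows the paper's route: the paper also obtains the corollary by taking $p=q=2$ in Theorem \ref{main result in of NSNPP} and using $\dot{B}^{s}_{2,2}=\dot{H}^{s}$. Your explicit choice $C_0=K^2C_0'$, $c_0=c_0'/K$ simply makes precise the absorption of the norm-equivalence constants, which the paper leaves implicit.
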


If we neglect the fluid motion in \eqref{NSNPP}, we obtain the following global well-posedness of the NPP equations \eqref{NPP} in the framework of Sobolev spaces.
\begin{corollary}\label{Sobolev result of NPP}
There exist two constants $c_{0}$ and $C_{0}$ such that if the initial data  $N_{0}, P_{0}\in
\dot{H}^{-2+\frac{d}{2}}$, and  satisfy the condition
\begin{equation}\label{initial condition in Sobolev space for NPP}
   \|N_{0}-P_{0}\|_{\dot{H}^{-2+\frac{d}{2}}}\exp\left\{C_{0}\big(\|N_{0}+P_{0}\|_{\dot{H}^{-2+\frac{d}{2}}}^2+1\big)\right\}
  \leq
  c_{0},
\end{equation}
then the NPP equations  \eqref{NPP} admits a unique global
solution $(N, P)$  satisfying
\begin{align*}
  N,P \in \widetilde{L}^{\infty}(0, \infty;
  \dot{H}^{-2+\frac{d}{2}})\cap \widetilde{L}^{2}(0, \infty;
 \dot{H}^{-1+\frac{d}{2}}).
\end{align*}
\end{corollary}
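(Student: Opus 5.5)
Corollary \ref{Sobolev result of NPP} follows from Theorem \ref{main result in of NSNPP} by a reduction rather than a fresh argument. Setting $u\equiv0$ in \eqref{NSNPP} does not give a solution of the full system in general, because the forcing $\Delta\phi\nabla\phi$ need not be a gradient. So I would not try to embed the NPP solution into the NS-NPP system. Instead I would rerun the proof of Theorem \ref{main result in of NSNPP} with $p=q=2$ and the velocity switched off, using $\dot{B}^{s}_{2,2}=\dot{H}^{s}$. In the transformed form \eqref{tranformed NSNPP} with $u\equiv 0$, the system becomes
\begin{equation*}
\partial_t v-\Delta v=\nabla\cdot(w\nabla(-\Delta)^{-1}v),\qquad \partial_t w-\Delta w=\nabla\cdot(v\nabla(-\Delta)^{-1}v).
\end{equation*}
Every estimate used in the theorem for the $(v,w)$ components survives with the terms involving $u$ deleted. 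These are the $\widetilde{L}^\infty(\dot H^{-2+\frac d2})\cap\widetilde L^2(\dot H^{-1+\frac d2})$ estimates on each Littlewood--Paley block, and the weighted estimates for $v_{\lambda,f}=v\exp\{-\lambda\int_0^t f^2\}$ with $f(t)=\|w(t)\|_{\dot{H}^{-1+\frac d2}}$. Dropping those terms only removes contributions, so the constants $c_0,C_0$ from the proof work unchanged.

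The key steps, in order, are as follows.

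\emph{Step 1: local solution.} Obtain a local solution with $v,w\in \widetilde{L}^\infty_T(\dot H^{-2+\frac d2})\cap \widetilde L^2_T(\dot H^{-1+\frac d2})$ by a standard fixed point argument.

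\emph{Step 2: bound on $w$.} For the linear $w$-equation, use the energy estimate on each block together with the bilinear bound $\|\nabla\cdot(v\nabla(-\Delta)^{-1}v)\|$. This gives
\begin{equation*}
\|w\|_{\widetilde L^\infty_t(\dot H^{-2+\frac d2})}+\|w\|_{\widetilde L^2_t(\dot H^{-1+\frac d2})}\le C\big(\|w_0\|_{\dot H^{-2+\frac d2}}+\|v\|^2_{\widetilde L^2_t(\dot H^{-1+\frac d2})}\big).
\end{equation*}
In particular $\int_0^t f^2\,d\tau$ is controlled by $\|w_0\|^2+1$ as long as $v$ stays small. This is where the square of $\|w_0\|$ and the "$+1$" in the exponent come from.

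\emph{Step 3: weighted estimate for $v$.} For $v_{\lambda,f}$, the commutator-free estimate on each block gives
\begin{equation*}
2^{j(-2+\frac d2)}\|\Delta_j\nabla\cdot(w\nabla(-\Delta)^{-1}v)\|_{L^2}\lesssim f(t)\,\|v\|_{\dot H^{-1+\frac d2}}\,c_j .
\end{equation*}
By Young's inequality, this weighted term is absorbed by the $\lambda f^2$ damping once $\lambda$ is chosen large. The result is
\begin{equation*}
\|v_{\lambda,f}\|_{\widetilde L^\infty_t\cap\widetilde L^2_t}\le C\|v_0\|.
\end{equation*}

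\emph{Step 4: undo the weight and close.} Remove the weight at the cost of the factor $\exp\{\lambda\int_0^t f^2\}\le \exp\{C_0(\|w_0\|^2+1)\}$. Then close with a continuity (contradiction) argument under \eqref{initial condition in Sobolev space for NPP}, which yields the global solution. Uniqueness follows from the usual difference estimate in the same norms.

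The main obstacle is Step 3. One must check that the product estimate placing $w$ in $\dot H^{-1+\frac d2}$ pointwise in time is valid for $p=q=2$. It is, since the conditions $q<d$, $q\le 2p$ and $\frac1p-\frac1q<\frac1d$ all hold when $p=q=2$ and $d\ge3$. I would verify this through the paraproduct decomposition, exactly as in the theorem.
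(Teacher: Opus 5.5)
Your reduction is the paper's: the corollary is the proof of Theorem \ref{main result in of NSNPP} rerun with $p=q=2$ and the fluid switched off. Replacing the Duhamel/H\"older/Fubini argument for the $\widetilde L^2_{t,f}$ norm by a blockwise energy estimate plus Young's inequality is legitimate when $q=2$. The problem is the bilinear bound at the heart of your Step 3. The inequality
\[
\big\|\nabla\cdot\big(w\nabla(-\Delta)^{-1}v\big)\big\|_{\dot H^{-2+\frac d2}}\lesssim \|w\|_{\dot H^{-1+\frac d2}}\|v\|_{\dot H^{-1+\frac d2}}
\]
is false. It puts $b:=\nabla(-\Delta)^{-1}v$ in $\dot H^{\frac d2}$, which is the endpoint excluded in Lemma \ref{product estimates in Besov spaces}. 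Write $\nabla\cdot(wb)=b\cdot\nabla w-wv$. The term $wv$ is harmless, but the paraproduct $T_b\nabla w$ needs $\|S_{j-1}b\|_{L^\infty}\lesssim\sum_{k<j}2^{k(-1+\frac d2)}\|\Delta_k v\|_{L^2}$, an $\ell^1$ sum of an $\ell^2$ sequence, and $\dot H^{\frac d2}\not\hookrightarrow L^\infty$.

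Concretely, take $b=K^{-1/2}\sum_{k=-K}^{0}(\nabla\theta)(2^k\cdot)$, with $\widehat\theta$ supported in an annulus and $\nabla\theta(0)\neq0$. Then $\|v\|_{\dot H^{-1+\frac d2}}\approx\|b\|_{\dot H^{\frac d2}}\approx1$, but $|b|\gtrsim\sqrt K$ near the origin. A high-frequency packet $w$ concentrated there and oscillating along $\nabla\theta(0)$ makes the left side $\gtrsim\sqrt K\,\|w\|_{\dot H^{-1+\frac d2}}$. This is exactly the $\ell^1$-versus-$\ell^2$ obstruction the corollary exists to get around. Your closing check verifies the hypotheses on $(p,q)$ in the theorem, not the condition of the product lemma, which is the one that fails.

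The display in Step 2, with $\|v\|^2_{\widetilde L^2_t(\dot H^{-1+\frac d2})}$ on the right, fails for the same reason. The correct bound is Lemma \ref{drift term of w}, with $\|v\|_{\widetilde L^\infty_t(\dot H^{-2+\frac d2})}\|v\|_{\widetilde L^2_t(\dot H^{-1+\frac d2})}\le\eta^2$ on the right. So your conclusion about $\int_0^t f^2\,d\tau$ survives.

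The repair is the one behind \eqref{estimate of wv-f}: put one derivative fewer on $v$ and move the divergence onto the test function. Set $G:=w\nabla(-\Delta)^{-1}v_{\lambda,f}$ and bound
\[
|\langle\Delta_j\nabla\cdot G,\Delta_j v_{\lambda,f}\rangle|\le C\|\Delta_jG\|_{L^2}\,2^{j}\|\Delta_jv_{\lambda,f}\|_{L^2}.
\]
Then use $\|G\|_{\dot H^{-2+\frac d2}}\le C\|w\|_{\dot H^{-1+\frac d2}}\|v_{\lambda,f}\|_{\dot H^{-2+\frac d2}}$, which is Lemma \ref{product estimates in Besov spaces} with $s_1=s_2=-1+\frac d2<\frac d2$ and $s_1+s_2=d-2>0$. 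After weighting by $2^{2j(-2+\frac d2)}$ and summing, the nonlinear contribution is at most $Cf\,\|v_{\lambda,f}\|_{\dot H^{-2+\frac d2}}\|v_{\lambda,f}\|_{\dot H^{-1+\frac d2}}$. Young's inequality splits this into two pieces:
\begin{itemize}
\item a small multiple of $\|v_{\lambda,f}\|^2_{\dot H^{-1+\frac d2}}$, absorbed by the dissipation;
\item $C'f^2\|v_{\lambda,f}\|^2_{\dot H^{-2+\frac d2}}$, absorbed by the $\lambda f^2$ damping once $\lambda>C'$.
\end{itemize}
With this change your Steps 3 and 4 close as you describe, and the argument becomes the $q=2$ energy version of the paper's Step 2.
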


This paper is organized as follows. We shall present the proof of Theorem \ref{main result in of NSNPP} in the next section and sketch the proofs of  the local well-posedness and global well-posedness with small initial data in Appendix.

\section{Proof of Theorem \ref{main result in of NSNPP}}

In this section, we present the proof of Theorem \ref{main result in of NSNPP}. It is equivalent to prove that there exist two constants $c_{0}, C_{0}$ such that if
\begin{equation}\label{initial condition in Besov space1}
  \Big(\|u_{0}\|_{\dot{B}^{-1+\frac{d}{p}}_{p,2}}+\|v_{0}\|_{\dot{B}^{-2+\frac{d}{q}}_{q,2}}\Big)\exp\Big\{C_{0}\big(\|w_{0}\|_{\dot{B}^{-2+\frac{d}{q}}_{q,2}}^{2}+1\big)\Big\}\leq c_{0},
\end{equation}
then the system \eqref{tranformed NSNPP} admits a unique global
solution $(u, v, w)$ satisfying
\begin{align*}
  u\in  \widetilde{L}^{\infty}(0, \infty;
  \dot{B}^{-1+\frac{d}{p}}_{p,2})\cap \widetilde{L}^{2}(0, \infty;
  \dot{B}^{\frac{d}{p}}_{p,2}),\ \ \ \
  v,w\in \widetilde{L}^{\infty}(0, \infty;
  \dot{B}^{-2+\frac{d}{q}}_{q,2})\cap \widetilde{L}^{2}(0, \infty;
  \dot{B}^{-1+\frac{d}{q}}_{q,2}).
\end{align*}
\smallbreak

We shall use the following solvability result for the Cauchy problem of the heat equation (cf. \cite{BCD11}):
\begin{equation}\label{heat equation}
\begin{cases}
  \partial_{t}u-\Delta u= g, \ \
  &x\in\mathbb{R}^{d}, \ t>0,\\
  u(x,0)=u_{0}(x), \ \ &x\in\mathbb{R}^{d}.
\end{cases}
\end{equation}
\begin{proposition}\label{linear estimates of heat equation}
Let $s\in \mathbb{R}$, $1\leq p,r,\rho_1\leq\infty$ and
$0<T\leq\infty$. Assume that $u_{0}\in
\dot{B}^{s}_{p,r}$ and $ g \in\widetilde{L}^{\rho_1}_{T}(\dot{B}^{s+\frac{2}{\rho_{1}}-2}_{p,r})$. Then the heat equation \eqref{heat equation} admits
a unique solution
$$u\in\underset{\rho_1\leq
\rho\leq\infty}{\cap}\widetilde{L}^{\rho}_{T}(\dot{B}^{s+\frac{2}{\rho}}_{p,r}).$$
 In addition, there exists a
constant $C>0$ such that for any $\rho_1\leq
\rho\leq\infty$, we have
\begin{equation}\label{linear estimates of heat equation0}
  \|u\|_{\widetilde{L}^{\rho}_{T}(\dot{B}^{s+\frac{2}{\rho}}_{p,r})}\leq
  C\Big(\|u_{0}\|_{\dot{B}^{s}_{p,r}}+ \| g\|_{\widetilde{L}^{\rho_1}_{T}(\dot{B}^{s+\frac{2}{\rho_{1}}-2}_{p,r})}\Big).
\end{equation}
\end{proposition}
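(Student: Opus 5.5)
The plan is to localize \eqref{heat equation} in frequency with $\Delta_j$ and use the explicit Duhamel formula. Applying $\Delta_j$ gives
\begin{equation*}
  \Delta_j u(t)=e^{t\Delta}\Delta_j u_0+\int_0^t e^{(t-\tau)\Delta}\Delta_j g(\tau)\,d\tau .
\end{equation*}
Uniqueness is immediate within the class of tempered distributions whose low-frequency parts vanish (the class implicit in Definition \ref{homo besov space}). Indeed, a difference of two solutions solves the free heat equation with zero data, so each $\Delta_j$ of it vanishes. The solution itself is given by the formula above, summed over $j$; convergence in $\mathcal{S}'$ will follow from the bounds established below.

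The key ingredient is the standard localized heat-semigroup estimate. Since $\Delta_j f$ has Fourier support in the annulus $\{|\xi|\sim 2^j\}$, there are constants $c,C>0$ independent of $j$, $t$ and $p\in[1,\infty]$ such that
\begin{equation*}
  \|e^{t\Delta}\Delta_j f\|_{L^p}\le C e^{-c2^{2j}t}\|\Delta_j f\|_{L^p}.
\end{equation*}
I would prove this by writing $e^{t\Delta}\Delta_j f=\big(e^{t\Delta}\tilde\psi_j\big)\ast \Delta_j f$, where $\tilde\psi_j$ is a smooth cutoff equal to $1$ on the annulus. By scaling it suffices to take $j=0$. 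One then shows that the kernel $\mathcal{F}^{-1}\big(e^{-t|\xi|^2}\tilde\psi(\xi)\big)$ has $L^1$ norm $\lesssim e^{-ct}$, by integrating by parts in $\xi$ (bounding $(1+|x|^2)^d$ times the kernel). Young's inequality then gives the estimate. This is the only genuinely analytic step, and the main point to verify is that the constant is uniform in $t$.

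With this estimate in hand, I take the $L^\rho(0,T)$ norm in time. For the data term,
\begin{equation*}
  \big\|e^{-c2^{2j}t}\big\|_{L^\rho(0,T)}\lesssim 2^{-2j/\rho}.
\end{equation*}
For the forcing term, the time integral is a convolution in $t$ with the kernel $e^{-c2^{2j}t}\mathbf 1_{t>0}$, so Young's inequality in time applies. Choose $r_2$ with $1+\frac1\rho=\frac1{r_2}+\frac1{\rho_1}$, which is possible because $\rho\ge\rho_1$. Then
\begin{equation*}
  \|e^{-c2^{2j}\cdot}\|_{L^{r_2}}\lesssim 2^{-2j/r_2}=2^{-2j(1+\frac1\rho-\frac1{\rho_1})}.
\end{equation*}
Combining the two pieces,
\begin{equation*}
  \|\Delta_j u\|_{L^\rho_T(L^p)}\le C\Big(2^{-\frac{2j}{\rho}}\|\Delta_j u_0\|_{L^p}+2^{-2j(1+\frac1\rho-\frac1{\rho_1})}\|\Delta_j g\|_{L^{\rho_1}_T(L^p)}\Big).
\end{equation*}
Multiplying by $2^{j(s+\frac2\rho)}$, the powers combine to give exactly
\begin{equation*}
  2^{js}\|\Delta_j u_0\|_{L^p}+2^{j(s+\frac2{\rho_1}-2)}\|\Delta_j g\|_{L^{\rho_1}_T(L^p)}.
\end{equation*}

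Finally, take the $\ell^r$ norm in $j$. The right-hand side becomes $\|u_0\|_{\dot B^s_{p,r}}+\|g\|_{\widetilde L^{\rho_1}_T(\dot B^{s+\frac2{\rho_1}-2}_{p,r})}$, and the left-hand side is precisely $\|u\|_{\widetilde L^\rho_T(\dot B^{s+\frac2\rho}_{p,r})}$ by Definition \ref{chemin-lerner space}. This yields \eqref{linear estimates of heat equation0}. Since the Chemin--Lerner norm takes the time norm inside the dyadic sum, no Minkowski interchange is needed; that is exactly why the estimate holds for all $r$. The uniform constant in the semigroup bound is the only real obstacle. The rest is bookkeeping of exponents, including the endpoint cases $\rho=\infty$ or $\rho_1=\rho$, where $r_2=\infty$ or $r_2=1$ respectively and Young's inequality still applies.
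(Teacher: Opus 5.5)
Your argument is correct and is the standard proof from \cite{BCD11}. The paper gives no proof of its own and simply cites that reference; the proof there uses the localized bound $\|e^{t\Delta}\Delta_j f\|_{L^p}\le Ce^{-c2^{2j}t}\|\Delta_j f\|_{L^p}$, then Young's inequality in time, then the $\ell^r$ sum in $j$, exactly as you do.

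There are two harmless slips. For $\rho=\infty$ the Young exponent is $r_2=\rho_1'$, not $\infty$ (the two coincide only when $\rho_1=1$). Also, convergence of $\sum_j\Delta_j u$ in $\mathcal{S}'$ follows from your bounds only when $s<\frac{d}{p}$ (or $s=\frac{d}{p}$, $r=1$), which covers every use in the paper. For general $s$, define $u$ directly by Duhamel's formula and inherit that convergence from $u_0$ and $g$.
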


By Theorem \ref{well-posedness in Besov spaces} in Appendix, we know that there exists $T>0$ such that the system \eqref{tranformed NSNPP}  admits a unique
solution $(u,v,w)$ such that
\begin{align*}
  u\in  \widetilde{L}^{\infty}(0, T;
  \dot{B}^{-1+\frac{d}{p}}_{p,2})\cap \widetilde{L}^{2}(0, T;
  \dot{B}^{\frac{d}{p}}_{p,2}),\ \ \ \
  v,w\in \widetilde{L}^{\infty}(0, T;
  \dot{B}^{-2+\frac{d}{q}}_{q,2})\cap \widetilde{L}^{2}(0, T;
  \dot{B}^{-1+\frac{d}{q}}_{q,2}).
\end{align*}  Let us denote by $T_{*}$ the maximal existence time of this local solution. Then to prove Theorem \ref{main result in of NSNPP},
it suffices to prove $T_{*}=\infty$ under the initial condition \eqref{initial condition in Besov space1}.
By using the method of contradiction, we assume that $T_{*}<\infty$  and denote by $\eta$  a small enough positive constant which the exact value will be determined later, then we define $T_{\eta}$ as
\begin{align}\label{T_eta}
  T_{\eta}:=\max\Big\{t\in[0,T_{*}):\ &\|u\|_{\widetilde{L}^{\infty}_{t}(\dot{B}^{-1+\frac{d}{p}}_{p,2})}
  +\|u\|_{\widetilde{L}^{2}_{t}(\dot{B}^{\frac{d}{p}}_{p,2})}+\|v\|_{\widetilde{L}^{\infty}_{t}(\dot{B}^{-2+\frac{d}{q}}_{q,2})}
  +\|v\|_{\widetilde{L}^{2}_{t}(\dot{B}^{-1+\frac{d}{q}}_{q,2})}\leq
  \eta \Big\}.
\end{align}
We split the proof into the following three steps.
\smallbreak

\textit{Step 1. Estimate of $w$}.   Applying Proposition \ref{linear estimates of heat equation}, Lemma \ref{convective term of w} and Lemma \ref{drift term of w}, one can easily see that
\begin{align}\label{estimate of w1}
 &\|w\|_{\widetilde{L}^{\infty}_{t}(\dot{B}^{-2+\frac{d}{q}}_{q,2})}\!+\!\|w\|_{\widetilde{L}^{2}_{t}(\dot{B}^{-1+\frac{d}{q}}_{q,2})}\nonumber\\
   &\leq C\Big(\|w_{0}\|_{\dot{B}^{-2+\frac{d}{q}}_{q,2}}+\|u\cdot\nabla w+\nabla\cdot\left(v\nabla(-\Delta)^{-1}v\right)\|_{\widetilde{L}^{2}_{t}(\dot{B}^{-3+\frac{d}{q}}_{q,2})}\Big)\nonumber\\
   &\leq C\Big(\|w_{0}\|_{\dot{B}^{-2+\frac{d}{q}}_{q,2}}+\|u\cdot \nabla w\|_{\widetilde{L}^{2}_{t}(\dot{B}^{-3+\frac{d}{q}}_{q,2})}+\|v\nabla(-\Delta)^{-1}v\|_{\widetilde{L}^{2}_{t}(\dot{B}^{-2+\frac{d}{q}}_{q,2})}\Big)\nonumber\\
   &\leq C\Big(\|w_{0}\|_{\dot{B}^{-2+\frac{d}{q}}_{q,2}}+\|u\|_{\widetilde{L}^{2}_{t}(\dot{B}^{\frac{d}{p}}_{p,2})}\|w\|_{\widetilde{L}^{\infty}_{t}(\dot{B}^{-2+\frac{d}{q}}_{q,2})}
   +\|u\|_{\widetilde{L}^{\infty}_{t}(\dot{B}^{-1+\frac{d}{p}}_{p,2})}
   \|w\|_{\widetilde{L}^{2}_{t}(\dot{B}^{-1+\frac{d}{q}}_{q,2})}\nonumber\\
   &\quad\quad\ \ +\|v\|_{\widetilde{L}^{\infty}_{t}(\dot{B}^{-2+\frac{d}{q}}_{q,2})}
   \|v\|_{\widetilde{L}^{2}_{t}(\dot{B}^{-1+\frac{d}{q}}_{q,2})}\Big).
\end{align}
Choosing  $\eta$ small enough such that $2C\eta<1$ and using \eqref{T_eta}, we get
\begin{align}\label{estimate of w2}
 \|w\|_{\widetilde{L}^{\infty}_{t}(\dot{B}^{-2+\frac{d}{q}}_{q,2})}+\|w\|_{\widetilde{L}^{2}_{t}(\dot{B}^{-1+\frac{d}{q}}_{q,2})}
 \leq 2C\|w_{0}\|_{\dot{B}^{-2+\frac{d}{q}}_{q,2}}+\eta.
\end{align}
\smallbreak

\textit{Step 2. Estimate of $v$}.   Let $\lambda$ be a positive constant which the exact value will be specified later. Then using the Duhamel's principle and taking the weight function $f(t)$ into consideration, we can see that $v_{\lambda,f}$ satisfies the following integral equation:
\begin{equation}\label{integral equation of v-f}
v_{\lambda,f}=e^{-\lambda\int_{0}^{t}f^{2}(\tau)d\tau}e^{t\Delta}v_{0}+\int_{0}^{t}e^{-\lambda\int_{s}^{t}f(\tau)d\tau}e^{(t-s)\Delta}\left(u\cdot\nabla v_{\lambda,f}+\nabla\cdot(w\nabla(-\Delta)^{-1}v_{\lambda,f})\right)ds.
\end{equation}
Notice that $e^{-\lambda\int_{s}^{t}f^{2}(\tau)d\tau}\leq 1$ for any $0\leq s\leq t$. Thus we can deduce from  Proposition \ref{linear estimates of heat equation} that
\begin{align}\label{estimate of v-f0}
   \|v_{\lambda,f}\|_{\widetilde{L}^{\infty}_{t}(\dot{B}^{-2+\frac{d}{q}}_{q,2})}
   &+\|v_{\lambda,f}\|_{\widetilde{L}^{2}_{t}(\dot{B}^{-1+\frac{d}{q}}_{q,2})}\nonumber\\
&\leq C\Big(\|v_{0}\|_{\dot{B}^{-2+\frac{d}{q}}_{q,2}}+ \|u\cdot \nabla v_{\lambda,f}+\nabla\cdot\left(w\nabla(-\Delta)^{-1}v_{\lambda,f}\right)\|_{\widetilde{L}^{2}_{t}(\dot{B}^{-3+\frac{d}{q}}_{q,2})}\Big)\nonumber\\
   &\leq C\Big(\|v_{0}\|_{\dot{B}^{-2+\frac{d}{q}}_{q,2}}+\|u\cdot\nabla v_{\lambda,f}\|_{\widetilde{L}^{2}_{t}(\dot{B}^{-3+\frac{d}{q}}_{q,2})}+\|w\nabla(-\Delta)^{-1}v_{\lambda,f}\|_{\widetilde{L}^{2}_{t}(\dot{B}^{-2+\frac{d}{q}}_{q,2})}\Big).
\end{align}
On the other hand, applying the dyadic operator $\Delta_{j}$ to \eqref{integral equation of v-f} and taking
$L^{q}$ norm to the resulting equation, we see that there exists a positive constant $\kappa$ such that
\begin{align}\label{estimate of v-f1}
   \|\Delta_{j}v_{\lambda,f}\|_{L^{q}}&\leq e^{-\lambda\int_{0}^{t}f^{2}(\tau)d\tau-\kappa t2^{2j}} \|\Delta_{j}v_{0}\|_{L^{q}}\nonumber\\
   &+\int_{0}^{t}e^{-\lambda\int_{s}^{t}f^{2}(\tau)d\tau-\kappa (t-s)2^{2j}}\|\Delta_{j}\left(u\cdot\nabla v_{\lambda,f}+\nabla\cdot(w\nabla(-\Delta)^{-1}v_{\lambda,f})\right)\|_{L^{q}}ds.
\end{align}
Multiplying \eqref{estimate of v-f1} by $\sqrt{\lambda} f(t)$ and taking $L^{2}$ norm with respect to time variable $t$ to the resulting inequality, we see that
\begin{align}\label{estimate of v-f2}
 &\sqrt{\lambda}\big\|f(\cdot)\|\Delta_{j}v_{\lambda,f}\|_{L^{q}}\big\|_{L^{2}_{t}}\leq \frac{1}{\sqrt{2}}\|\Delta_{j}v_{0}\|_{L^{q}}\nonumber\\
 &+\sqrt{\lambda} \Big\|f(t)\int_{0}^{t}e^{-\lambda\int_{s}^{t}f^{2}(\tau)d\tau-\kappa (t-s)2^{2j}}\|\Delta_{j}\big(u\cdot\nabla v_{\lambda,f}+\nabla\cdot(w\nabla(-\Delta)^{-1}v_{\lambda,f})\big)\|_{L^{q}}ds\Big\|_{L^{2}_{t}},
\end{align}
where we have used the fact that
\begin{align*}
\Big\|\sqrt{\lambda} f(t) e^{-\lambda\int_{0}^{t}f^{2}(\tau)d\tau-\kappa t2^{2j}}\Big\|_{L^{2}_{t}}&\leq \Big(\int_{0}^{t}e^{-2\lambda\int_{0}^{s}f^{2}(\tau)d\tau}\lambda f^{2}(s)ds\Big)^{\frac{1}{2}}\nonumber\\
&\leq \Big(\int_{0}^{t}e^{-2\lambda\int_{0}^{s}f^{2}(\tau)d\tau}d\big(\lambda\int_{0}^{s}f^{2}(\tau)d\tau\big)\Big)^{\frac{1}{2}}\nonumber\\
&\leq \frac{1}{\sqrt{2}}.
\end{align*}
To bound the second term on the right-hand side of \eqref{estimate of v-f2}, we first apply the H\"{o}lder's inequality to get
\begin{align}\label{estimate of v-f3}
&\sqrt{\lambda}f(t)\int_{0}^{t}e^{-\lambda\int_{s}^{t}f^{2}(\tau)d\tau-\kappa (t-s)2^{2j}}\|\Delta_{j}\big(u\cdot\nabla v_{\lambda,f}+\nabla\cdot(w\nabla(-\Delta)^{-1}v_{\lambda,f})\big)\|_{L^{q}}ds\nonumber\\
&\leq \Big(\int_{0}^{t}\lambda f^{2}(t) e^{-2\lambda\int_{s}^{t}f^{2}(\tau)d\tau}\|\Delta_{j}\big(u\cdot\nabla v_{\lambda,f}+\nabla\cdot(w\nabla(-\Delta)^{-1}v_{\lambda,f})\big)\|_{L^{q}}^{2}ds\Big)^{\frac{1}{2}}\Big(\int_{0}^{t}e^{-2\kappa (t-s)2^{2j}}ds\Big)^{\frac{1}{2}}\nonumber\\
&\leq C2^{-j}\Big(\int_{0}^{t}\lambda f^{2}(t) e^{-2\lambda\int_{s}^{t}f^{2}(\tau)d\tau}\|\Delta_{j}\big(u\cdot\nabla v_{\lambda,f}+\nabla\cdot(w\nabla(-\Delta)^{-1}v_{\lambda,f})\big)\|_{L^{q}}^{2}ds\Big)^{\frac{1}{2}}.
\end{align}
Secondly, by taking $L^{2}$ norm with respect to time variable $t$ on $[0,T]$ (where $T$ is just a sign to distinct $t$), one gets
\begin{align}\label{estimate of v-f4}
&\Big\|\sqrt{\lambda}f(t)\int_{0}^{t}e^{-\lambda\int_{s}^{t}f^{2}(\tau)d\tau-\kappa (t-s)2^{2j}}\|\Delta_{j}\big(u\cdot\nabla v_{\lambda,f}+\nabla\cdot(w\nabla(-\Delta)^{-1}v_{\lambda,f})\big)\|_{L^{q}}ds\Big\|_{L^{2}_{t}}\nonumber\\
&\leq C2^{-j}\Big(\int_{0}^{T}\int_{0}^{t}\lambda f^{2}(t) e^{-2\lambda\int_{s}^{t}f^{2}(\tau)d\tau}\|\Delta_{j}\big(u\cdot\nabla v_{\lambda,f}+\nabla\cdot(w\nabla(-\Delta)^{-1}v_{\lambda,f})\big)\|_{L^{q}}^{2}dsdt\Big)^{\frac{1}{2}}.
\end{align}
Using Fubini's theorem, we obtain from \eqref{estimate of v-f4} that
\begin{align}\label{estimate of v-f5}
&\Big\|\sqrt{\lambda}f(t)\int_{0}^{t}e^{-\lambda\int_{s}^{t}f^{2}(\tau)d\tau-\kappa (t-s)2^{2j}}\|\Delta_{j}\big(u\cdot\nabla v_{\lambda,f}+\nabla\cdot(w\nabla(-\Delta)^{-1}v_{\lambda,f})\big)\|_{L^{q}}ds\Big\|_{L^{2}_{t}}\nonumber\\
&\leq C2^{-j}\Big(\int_{0}^{T}\lambda f^{2}(t)e^{-2\lambda\int_{s}^{t}f^{2}(\tau)d\tau}dt\Big)^{\frac{1}{2}}
\|\Delta_{j}\big(u\cdot\nabla v_{\lambda,f}+\nabla\cdot(w\nabla(-\Delta)^{-1}v_{\lambda,f})\big)\|_{L^{2}_{t}(L^{q})}\nonumber\\
&\leq C2^{-j}\|\Delta_{j}\big(u\cdot\nabla v_{\lambda,f}+\nabla\cdot(w\nabla(-\Delta)^{-1}v_{\lambda,f})\big)\|_{L^{2}_{t}(L^{q})}.
\end{align}
Taking \eqref{estimate of v-f5} into \eqref{estimate of v-f2}, we obtain
\begin{align}\label{estimate of v-f6}
 &\sqrt{\lambda}\Big\|f(\cdot)\big\|\Delta_{j}v_{\lambda,f}\big\|_{L^{q}}\Big\|_{L^{2}_{t}}\leq \frac{1}{\sqrt{2}}\|\Delta_{j}v_{0}\|_{L^{p}}+C2^{-j}\Big\|\Delta_{j}\big(u\cdot\nabla v_{\lambda,f}+\nabla\cdot(w\nabla(-\Delta)^{-1}v_{\lambda,f})\big)\Big\|_{L^{2}_{t}(L^{q})}.
\end{align}
Multiplying $2^{(-2+\frac{d}{q})j}$ to \eqref{estimate of v-f6}, then taking $\ell^{2}$ norm with respect to $j$, one sees that
\begin{align}\label{estimate of v-f7}
 &\sqrt{\lambda}\|v_{\lambda,f}\|_{\widetilde{L}^{2}_{t,f}(\dot{B}^{-2+\frac{d}{q}}_{q,2})}\leq \|v_{0}\|_{\dot{B}^{-2+\frac{d}{q}}_{q,2}}+C \|u\cdot\nabla v_{\lambda,f}+\nabla\cdot(w\nabla(-\Delta)^{-1}v_{\lambda,f})\|_{\widetilde{L}^{2}_{t}(\dot{B}^{-3+\frac{d}{q}}_{q,2})}.
\end{align}
Now putting \eqref{estimate of v-f0} and \eqref{estimate of v-f7} together, we obtain
\begin{align}\label{estimate of v-f}
   \|v_{\lambda,f}\|_{\widetilde{L}^{\infty}_{t}(\dot{B}^{-2+\frac{d}{q}}_{q,2})}&
   +\sqrt{\lambda}\|v_{\lambda,f}\|_{\widetilde{L}^{2}_{t,f}(\dot{B}^{-2+\frac{d}{q}}_{q,2})}
   +\|v_{\lambda,f}\|_{\widetilde{L}^{2}_{t}(\dot{B}^{-1+\frac{d}{q}}_{q,2})}\nonumber\\
   &\leq C\Big(\|v_{0}\|_{\dot{B}^{-2+\frac{d}{q}}_{q,2}}+\|u\cdot\nabla v_{\lambda,f}\|_{\widetilde{L}^{2}_{t}(\dot{B}^{-3+\frac{d}{q}}_{q,2})}
   +\|w\nabla(-\Delta)^{-1}v_{\lambda,f}\|_{\widetilde{L}^{2}_{t}(\dot{B}^{-2+\frac{d}{q}}_{q,2})}\Big).
\end{align}
By Lemma \ref{convective term of w}, we can bound the second term on the right-hand side of \eqref{estimate of v-f} that
\begin{align}\label{estimate of uv-f}
   \|u\cdot \nabla v_{\lambda,f}\|_{\widetilde{L}^{2}_{t}(\dot{B}^{-3+\frac{d}{q}}_{q,2})}\leq C \Big(\|u\|_{\widetilde{L}^{2}_{t}(\dot{B}^{\frac{d}{p}}_{p,2})} \|v_{\lambda,f}\|_{\widetilde{L}^{\infty}_{t}(\dot{B}^{-2+\frac{d}{q}}_{q,2})}+ \|u\|_{\widetilde{L}^{\infty}_{t}(\dot{B}^{-1+\frac{d}{p}}_{p,2})} \|v_{\lambda,f}\|_{\widetilde{L}^{2}_{t}(\dot{B}^{-1+\frac{d}{q}}_{q,2})}\Big).
\end{align}
Using the Minkowski's inequality, we can use Lemma \ref{product estimates in Besov spaces} to estimate the third term on the right-hand side of \eqref{estimate of v-f} that
\begin{align}\label{estimate of wv-f}
   \|w\nabla(-\Delta)^{-1}v_{\lambda,f}\|_{\widetilde{L}^{2}_{t}(\dot{B}^{-2+\frac{d}{q}}_{q,2})}&\approx \|w\nabla(-\Delta)^{-1}v_{\lambda,f}\|_{L^{2}_{t}(\dot{B}^{-2+\frac{d}{q}}_{q,2})}\nonumber\\&
   =\Big(\int_{0}^{t}\|w\nabla(-\Delta)^{-1}v_{\lambda,f}\|_{\dot{B}^{-2+\frac{d}{q}}_{q,2}}^{2}ds\Big)^{\frac{1}{2}}\nonumber\\
   &\leq C\Big(\int_{0}^{t}\|w\|_{\dot{B}^{-1+\frac{d}{q}}_{q,2}}^{2}\|v_{\lambda,f}\|_{\dot{B}^{-2+\frac{d}{q}}_{q,2}}^{2}ds\Big)^{\frac{1}{2}}\nonumber\\
   &\leq C \|v_{\lambda,f}\|_{\widetilde{L}^{2}_{t,f}(\dot{B}^{-2+\frac{d}{q}}_{q,2})}.
\end{align}
Taking \eqref{estimate of uv-f} and \eqref{estimate of wv-f} into  \eqref{estimate of v-f} , we know that
\begin{align}\label{estimate of v-f10}
   &\|v_{\lambda,f}\|_{\widetilde{L}^{\infty}_{t}(\dot{B}^{-2+\frac{d}{q}}_{q,2})}
   +\sqrt{\lambda}\|v_{\lambda,f}\|_{\widetilde{L}^{2}_{t,f}(\dot{B}^{-2+\frac{d}{q}}_{q,2})}
     +\|v_{\lambda,f}\|_{\widetilde{L}^{2}_{t}(\dot{B}^{-1+\frac{d}{q}}_{q,2})}\leq C\Big(\|v_{0}\|_{\dot{B}^{-2+\frac{d}{q}}_{q,2}} \nonumber\\
     &\quad\quad+\|u\|_{\widetilde{L}^{2}_{t}(\dot{B}^{\frac{d}{p}}_{p,2})} \|v_{\lambda,f}\|_{\widetilde{L}^{\infty}_{t}(\dot{B}^{-2+\frac{d}{q}}_{q,2})}+ \|u\|_{\widetilde{L}^{\infty}_{t}(\dot{B}^{-1+\frac{d}{p}}_{p,2})} \|v_{\lambda,f}\|_{\widetilde{L}^{2}_{t}(\dot{B}^{-1+\frac{d}{q}}_{q,2})}+\|v_{\lambda,f}\|_{\widetilde{L}^{2}_{t,f}(\dot{B}^{-2+\frac{d}{q}}_{q,2})}\Big).
\end{align}
Therefore, by choosing $\lambda$ large enough such that $\sqrt{\lambda}>C$ and $\eta$ small enough such that $2C\eta<1$,  we obtain
\begin{align}\label{estimate of v-f11}
   \|v_{\lambda,f}\|_{\widetilde{L}^{\infty}_{t}(\dot{B}^{-2+\frac{d}{q}}_{q,2})}
   +\|v_{\lambda,f}\|_{\widetilde{L}^{2}_{t}(\dot{B}^{-1+\frac{d}{q}}_{q,2})}
   \leq 2C\|v_{0}\|_{\dot{B}^{-2+\frac{d}{q}}_{q,2}}.
\end{align}

\textit{Step 3. Estimate of $u$}.  Notice that the weighted function $u_{\lambda,f}$ satisfies the following equivalent integral equation:
\begin{equation}\label{weighted equation of u}
u_{\lambda,f}=e^{-\lambda\int_{0}^{t}f^{2}(\tau)d\tau}e^{t\Delta}u_{0}-\int_{0}^{t}e^{-\lambda\int_{s}^{t}f(\tau)d\tau}e^{(t-s)\Delta}\mathbb{P}(u\cdot\nabla u_{\lambda,f}-v\nabla(-\Delta)^{-1}v_{\lambda,f})ds,
\end{equation}
where $\mathbb{P}=I+\nabla(-\Delta)^{-1}\operatorname{div}$ is the Leray projector,
i.e., the $d\times d$ matrix pseudo-differential operator in
$\mathbb{R}^d$ with the symbol
$(\delta_{ij}-\frac{\xi_i\xi_j}{|\xi|^2})_{i,j=1}^d$. Since  $\mathbb{P}$ is a bounded linear operator in the homogeneous Besov spaces, we obtain from Proposition \ref{linear estimates of heat equation} that
\begin{align}\label{estimate of weighted u}
   \|u_{\lambda,f}\|_{\widetilde{L}^{\infty}_{t}(\dot{B}^{-1+\frac{d}{p}}_{p,2})}
   +\|u_{\lambda,f}\|_{\widetilde{L}^{2}_{t}(\dot{B}^{\frac{d}{p}}_{p,2})}
   \leq C\Big(\|u_{0}\|_{\dot{B}^{-1+\frac{d}{p}}_{p,2}}+\|u\cdot\nabla u_{\lambda,f}\!-\!v\nabla(-\Delta)^{-1}v_{\lambda,f}\|_{\widetilde{L}^{2}_{t}(\dot{B}^{-2+\frac{d}{p}}_{p,2})}\Big).
\end{align}
For the first term on the right-hand side of \eqref{estimate of weighted u},  by using Lemma \ref{convective term in NS}, we can bound it as
\begin{align*}
   \|u\cdot\nabla u_{\lambda,f}\|_{\widetilde{L}^{2}_{t}(\dot{B}^{-2+\frac{d}{p}}_{p,2})}\leq C\|u\|_{\widetilde{L}^{2}_{t}(\dot{B}^{\frac{d}{p}}_{p,2})}\|u_{\lambda,f}\|_{\widetilde{L}^{\infty}_{t}(\dot{B}^{-1+\frac{d}{p}}_{p,2})};
\end{align*}
while for the second term on the right-hand side of \eqref{estimate of weighted u},  by using Lemma \ref{external force in NS}, we get
\begin{align*}
   \| v\nabla(-\Delta)^{-1}v_{\lambda,f}\|_{\widetilde{L}^{2}_{t}(\dot{B}^{-2+\frac{d}{p}}_{p,2})}\leq C\|v\|_{\widetilde{L}^{2}_{t}(\dot{B}^{-1+\frac{d}{q}}_{q,2})}\|v_{\lambda,f}\|_{\widetilde{L}^{\infty}_{t}(\dot{B}^{-2+\frac{d}{q}}_{q,2})}.
\end{align*}
Taking these two estimates into \eqref{estimate of weighted u}, and choosing $\eta$ small enough such that $4C\eta<1$, we get
\begin{align}\label{estimate of weighted u1}
  & \|u_{\lambda,f}\|_{\widetilde{L}^{\infty}_{t}(\dot{B}^{-1+\frac{d}{p}}_{p,2})}
   +\|u_{\lambda,f}\|_{\widetilde{L}^{2}_{t}(\dot{B}^{\frac{d}{p}}_{p,2})}\nonumber\\
   &
   \leq C\Big(\|u_{0}\|_{\dot{B}^{-1+\frac{d}{p}}_{p,2}}+\|u\|_{\widetilde{L}^{2}_{t}(\dot{B}^{\frac{d}{p}}_{p,2})}\|u_{\lambda,f}\|_{\widetilde{L}^{\infty}_{t}(\dot{B}^{-1+\frac{d}{p}}_{p,2})}
   +\|v\|_{\widetilde{L}^{2}_{t}(\dot{B}^{-1+\frac{d}{q}}_{q,2})}\|v_{\lambda,f}\|_{\widetilde{L}^{\infty}_{t}(\dot{B}^{-2+\frac{d}{q}}_{q,2})}\Big)\nonumber\\
   &\leq C\|u_{0}\|_{\dot{B}^{-1+\frac{d}{p}}_{p,2}}+\frac{1}{4}\|u_{\lambda,f}\|_{\widetilde{L}^{\infty}_{t}(\dot{B}^{-1+\frac{d}{p}}_{p,2})}
   +\frac{1}{4}\|v_{\lambda,f}\|_{\widetilde{L}^{\infty}_{t}(\dot{B}^{-2+\frac{d}{q}}_{q,2})}.
\end{align}
\smallbreak

Finally, putting \eqref{estimate of v-f11} and \eqref{estimate of weighted u1} together, we obtain from \eqref{estimate of w2} that
\begin{align}\label{estimate of uv}
  \|u\|_{\widetilde{L}^{\infty}_{t}(\dot{B}^{-1+\frac{d}{p}}_{p,2})}
   & +\|u\|_{\widetilde{L}^{2}_{t}(\dot{B}^{\frac{d}{p}}_{p,2})}+ \|v\|_{\widetilde{L}^{\infty}_{t}(\dot{B}^{-2+\frac{d}{q}}_{q,2})}
   +\|v\|_{\widetilde{L}^{2}_{t}(\dot{B}^{-1+\frac{d}{q}}_{q,2})}\nonumber\\
   &\leq C\Big(\|u_{0}\|_{\dot{B}^{-1+\frac{d}{p}}_{p,2}}+\|v_{0}\|_{\dot{B}^{-2+\frac{d}{q}}_{q,2}}\Big)\exp\Big\{\lambda\int_{0}^{t}\|w(\tau)\|_{\dot{B}^{-1+\frac{d}{q}}_{q,2}}^{2}d\tau\Big\}\nonumber\\
   &\leq C\Big(\|u_{0}\|_{\dot{B}^{-1+\frac{d}{p}}_{p,2}}+\|v_{0}\|_{\dot{B}^{-2+\frac{d}{q}}_{q,2}}\Big)\exp\Big\{C
   (\|w_{0}\|_{\dot{B}^{-2+\frac{d}{q}}_{q,2}}^{2}+1)\Big\}.
\end{align}
Thus we conclude that if we take $C_{0}$ large enough and $c_{0}$ small enough in \eqref{initial condition in Besov space1}, then it follows from \eqref{estimate of uv} that
\begin{align*}
     \|u\|_{\widetilde{L}^{\infty}_{t}(\dot{B}^{-1+\frac{d}{p}}_{p,2})}
   +\|u\|_{\widetilde{L}^{2}_{t}(\dot{B}^{\frac{d}{p}}_{p,2})}+ \|v\|_{\widetilde{L}^{\infty}_{t}(\dot{B}^{-2+\frac{d}{q}}_{q,2})}
   +\|v\|_{\widetilde{L}^{2}_{t}(\dot{B}^{-1+\frac{d}{q}}_{q,2})}
   \leq \frac{\eta}{2}
\end{align*}
for all $t<T_{\eta}$, which contradicts with the maximality of $T_{\eta}$, thus $T^*=\infty$. We complete the proof of Theorem \ref{main result in of NSNPP}.

\section{Appendix}

In this Appendix, we first recall some well-known results in the  homogeneous Besov spaces and the Chemin-Lerner spaces, then we sketch the proof of local well-posedness and global well-posedness with small
initial data of the tranformed NS-NPP  system  \eqref{tranformed NSNPP}.

\subsection{Analytical tools in Besov spaces}

 Let us first recall the well-known Bernstein's inequalities and some  properties of the homogeneous Besov spaces. For more details, see \cite{BCD11, T83}.

\begin{lemma}\label{Bernstein inequality}
Let $\mathcal{B}$ be a ball, and $\mathcal{C}$  a ring in
$\mathbb{R}^{d}$. There exists a constant $C$ such that for any
positive real number $\lambda$, any nonnegative integer $k$ and any
couple of real numbers $(a,b)$ with $1\leq a\le b\leq \infty$, we
have
\begin{equation}\label{Bernstein inequality1}
   \operatorname{supp}\hat{f}\subset\lambda\mathcal{B}\ \ \Rightarrow\ \   \sup_{|\alpha|=k}\|\partial^{\alpha}f\|_{L^{b}}\leq
   C^{k+1}\lambda^{k+d(\frac{1}{a}-\frac{1}{b})}\|f\|_{L^{a}},
\end{equation}
\begin{equation}\label{Bernstein inequality2}
   \operatorname{supp}\hat{f}\subset\lambda\mathcal{C} \ \ \Rightarrow\ \   C^{-1-k}\lambda^{k}\|f\|_{L^{a}}\leq
   \sup_{|\alpha|=k}\|\partial^{\alpha}f\|_{L^{a}}\leq  C^{1+k}\lambda^{k}\|f\|_{L^{a}}.
\end{equation}
\end{lemma}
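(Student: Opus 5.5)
The statement just given is the Bernstein inequality (Lemma \ref{Bernstein inequality}). I would prove both parts by writing $f$ as a convolution with a rescaled Schwartz kernel and applying Young's inequality. The scaling reduces everything to the case $\lambda=1$.

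For \eqref{Bernstein inequality1}, fix $\chi\in\mathcal{S}$ with $\widehat{\chi}\equiv1$ on $\mathcal{B}$, and put $\chi_\lambda(x)=\lambda^d\chi(\lambda x)$. If $\operatorname{supp}\hat f\subset\lambda\mathcal{B}$, then $\hat f=\widehat{\chi_\lambda}\hat f$, so $f=\chi_\lambda\ast f$ and $\partial^\alpha f=(\partial^\alpha\chi_\lambda)\ast f$. Take $r$ with $1+\frac1b=\frac1r+\frac1a$; this is admissible since $a\le b$. Young's inequality gives
\[
\|\partial^\alpha f\|_{L^b}\le\|\partial^\alpha\chi_\lambda\|_{L^r}\|f\|_{L^a}.
\]
A change of variables gives
\[
\|\partial^\alpha\chi_\lambda\|_{L^r}=\lambda^{|\alpha|+d(1-\frac1r)}\|\partial^\alpha\chi\|_{L^r},
\]
and $d(1-\frac1r)=d(\frac1a-\frac1b)$, which produces the power of $\lambda$ in the statement. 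It remains to bound $\|\partial^\alpha\chi\|_{L^r}\le C^{k+1}$ uniformly for $|\alpha|=k$. I would use $\|g\|_{L^r}\le C\|(1+|x|^2)^{d}g\|_{L^\infty}$ and estimate $(1+|x|^2)^d\partial^\alpha\chi$ through its Fourier transform. That transform is a finite combination of derivatives of $\xi^\alpha\widehat\chi(\xi)$, and $\widehat\chi$ has compact support. Counting derivatives yields growth at most geometric in $k$.

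For \eqref{Bernstein inequality2}, the upper bound follows from the first part, since a ring is contained in a ball. For the lower bound, pick $\tilde\phi\in C_c^\infty(\mathbb{R}^d\setminus\{0\})$ with $\tilde\phi\equiv1$ on $\mathcal{C}$. Then
\[
\hat f(\xi)=\sum_{|\alpha|=k}\frac{(-i\xi)^\alpha}{|\xi|^{2k}}\,c_\alpha\,\tilde\phi(\xi)\,\widehat{\partial^\alpha f}(\xi)\cdot(\text{const}),
\]
where the $c_\alpha$ are the multinomial coefficients coming from $|\xi|^{2k}=\sum_{|\alpha|=k}c_\alpha\xi^{2\alpha}$. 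Set $g_\alpha=\mathcal{F}^{-1}\big(\xi^\alpha|\xi|^{-2k}\tilde\phi\big)\in\mathcal{S}$. After rescaling,
\[
f=\lambda^{-k}\sum_{|\alpha|=k}c_\alpha\, g_{\alpha,\lambda}\ast\partial^\alpha f,
\]
with $\|g_{\alpha,\lambda}\|_{L^1}=\|g_\alpha\|_{L^1}$. Young's inequality with $L^1$ then gives
\[
\lambda^k\|f\|_{L^a}\le\sum_{|\alpha|=k} c_\alpha\|g_\alpha\|_{L^1}\,\sup_{|\alpha|=k}\|\partial^\alpha f\|_{L^a}.
\]

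The main obstacle is the exponential dependence of the constants on $k$. The multinomial sum $\sum c_\alpha=d^k$ and the $L^1$ norms of $g_\alpha$ must together be at most $C^{k+1}$. I would handle this as in Bahouri--Chemin--Danchin. Bound $\|g_\alpha\|_{L^1}\le C\|(1+|x|^2)^dg_\alpha\|_{L^\infty}$, then control that by derivatives up to order $2d$ of the multiplier, which lives on a fixed annulus. There each factor $\xi_j/|\xi|^2$ and each of its derivatives is bounded by a fixed constant, so Leibniz's rule costs at most $C^k$. Since the paper only uses these inequalities for fixed small $k$, the precise form of the constant is in any case harmless.
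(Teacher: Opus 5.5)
Your proposal is correct, and it is the standard argument of Bahouri--Chemin--Danchin (their Lemma 2.1): scaling to $\lambda=1$, then $f=\chi_\lambda\ast f$ with Young's inequality for the first estimate, and the identity $|\xi|^{2k}=\sum_{|\alpha|=k}c_\alpha\xi^{2\alpha}$ with a cutoff $\tilde\phi$ supported away from the origin for the reverse inequality. The paper gives no proof of its own and simply recalls the lemma from that reference; your one needed adjustment is to take $\widehat{\chi}\equiv1$ and $\tilde\phi\equiv1$ on a neighborhood of $\mathcal{B}$ and $\mathcal{C}$, so that identities such as $\hat f=\widehat{\chi_\lambda}\hat f$ are immediate for a distribution $\hat f$.
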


\begin{lemma}\label{Properties}
The following properties hold:
\begin{itemize}

\item [i)] Derivatives: There exists a universal constant $C$ such that
\begin{equation*}
    C^{-1}\|u\|_{\dot{B}^{s}_{p,r}}\leq \|\nabla u\|_{\dot{B}^{s-1}_{p,r}}\leq C\|u\|_{\dot{B}^{s}_{p,r}}.
\end{equation*}

\item [ii)] Fractional derivative: Let $\Lambda:=\sqrt{-\Delta}$ and $\sigma\in\mathbb{R}$. Then the operator $\Lambda^{\sigma}$ is an isomorphism from $\dot{B}^{s}_{p,r}$
to $\dot{B}^{s-\sigma}_{p,r}$.

\item [iii)] Imbedding: For $1\leq p_{1}\leq p_{2}\leq \infty$ and $1\leq r_{1}\leq r_{2}\leq \infty$, we have the continuous imbedding  $\dot{B}^{s}_{p_{1},r_{1}}\hookrightarrow \dot{B}^{s-d(\frac{1}{p_{1}}-\frac{1}{p_{2}})}_{p_{2},r_{2}}$.

\item [iv)] Interpolation:  For  $s_{1},s_{2}\in \mathbb{R}$ such that $s_{1}<s_{2}$,  $\theta\in [0,1]$ and $1\leq p, r\leq \infty$, there exists a constant $C$
such that
\begin{align}\label{interpolation in Besov spaces}
    \|u\|_{\dot{B}^{s_{1}\theta+s_{2}(1-\theta)}_{p,r}}\leq C\|u\|_{\dot{B}^{s_{1}}_{p,r}}^{\theta}\|u\|_{\dot{B}^{s_{2}}_{p,r}}^{1-\theta}.
\end{align}
\end{itemize}
\end{lemma}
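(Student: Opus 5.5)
The statement just above is Lemma \ref{Properties}, a list of standard facts about homogeneous Besov spaces. I would prove each item directly from Definition \ref{homo besov space} together with the Bernstein inequalities of Lemma \ref{Bernstein inequality}, working block by block in $j$ and then summing in $\ell^r$.

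\textbf{Items i) and ii).} Since $\Delta_j f$ has Fourier support in the ring $2^j\mathcal{C}$, inequality \eqref{Bernstein inequality2} with $k=1$ gives
\[
C^{-1}2^{j}\|\Delta_j u\|_{L^p}\le \|\nabla\Delta_j u\|_{L^p}\le C2^{j}\|\Delta_j u\|_{L^p}.
\]
Multiplying by $2^{(s-1)j}$, using $\Delta_j\nabla=\nabla\Delta_j$, and taking $\ell^r$ norms gives i). For ii), I would write $\Lambda^\sigma\Delta_j u=2^{\sigma j}\,h_j\ast\Delta_j u$, where $h_j$ is the rescaled kernel of $|\xi|^\sigma\widetilde{\chi}(2^{-j}\xi)$ and $\widetilde{\chi}$ is a smooth cutoff equal to $1$ on the annulus. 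The kernel has $\|h_j\|_{L^1}=\|h_0\|_{L^1}<\infty$ by scaling. Young's inequality then gives the upper bound $\|\Lambda^\sigma\Delta_j u\|_{L^p}\le C2^{\sigma j}\|\Delta_j u\|_{L^p}$. Applying the same argument to $\Lambda^{-\sigma}$ gives the inverse bound, so $\Lambda^\sigma$ is an isomorphism. The convergence condition in the definition (the partial sums converge to the distribution) is preserved because $\Lambda^{\pm\sigma}$ acts blockwise.

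\textbf{Item iii).} By \eqref{Bernstein inequality1} with $k=0$, $a=p_1$, $b=p_2$,
\[
\|\Delta_j u\|_{L^{p_2}}\le C2^{jd(\frac1{p_1}-\frac1{p_2})}\|\Delta_j u\|_{L^{p_1}}.
\]
Multiplying by $2^{j(s-d(\frac1{p_1}-\frac1{p_2}))}$ gives the $L^{p_2}$ blocks weighted exactly as the $L^{p_1}$ blocks with weight $2^{js}$. The embedding $\ell^{r_1}\hookrightarrow\ell^{r_2}$ then finishes the proof.

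\textbf{Item iv).} Hölder alone does not give this when $r<\infty$, so this is the main obstacle. I would first note the pointwise bound $2^{js}\|\Delta_j u\|\le (2^{js_1}\|\Delta_j u\|)^\theta(2^{js_2}\|\Delta_j u\|)^{1-\theta}$ with $s=s_1\theta+s_2(1-\theta)$. Hölder in $\ell^r$ with exponents $r/\theta$ and $r/(1-\theta)$, combined with $\ell^r\hookrightarrow\ell^{r/\theta}$, then yields the claim with $C=1$. The obstacle is only in making this careful: the case $\theta\in\{0,1\}$ is trivial, and the case $r=\infty$ follows directly from the supremum. Alternatively, one can split the sum into $j\le N$ and $j>N$ and optimize $N$; this even gives the stronger $\dot B^{s_i}_{p,\infty}$ version.
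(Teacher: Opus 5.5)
The paper gives no proof of this lemma; it only refers to the monographs of Bahouri--Chemin--Danchin and Triebel. Your block-by-block arguments are the standard ones from those sources. Items i), iii), iv) and the two-sided block estimate $\|\Lambda^\sigma\Delta_j u\|_{L^p}\approx 2^{j\sigma}\|\Delta_j u\|_{L^p}$ in ii) are correct.

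\textbf{The gap is the last sentence of ii).} With the paper's definition (partial sums converging in $\mathcal{S}'$), $\Lambda^\sigma u$ must be defined as $\sum_j\Lambda^\sigma\Delta_j u$. The fact that $\Lambda^{\pm\sigma}$ acts blockwise says nothing about whether this series converges.

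High frequencies are harmless, since pairing with a Schwartz function decays rapidly as $j\to+\infty$. As $j\to-\infty$, however, Bernstein only gives $\|\Lambda^\sigma\Delta_j u\|_{L^\infty}\le C2^{j(\sigma+\frac dp-s)}c_j\|u\|_{\dot B^s_{p,r}}$ with $(c_j)$ in the unit ball of $\ell^r$. This bound is summable only when $s-\sigma<\frac dp$ (or $s-\sigma=\frac dp$ and $r=1$).

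Without that condition the claim is false. Take $s=0$, $p=r=2$, $\sigma=-d$ and $\widehat u(\xi)=|\xi|^{-d/4}\mathbf{1}_{\{|\xi|<1\}}$. Then $u\in L^2=\dot B^0_{2,2}$, but $|\xi|^{-d}\widehat u\sim|\xi|^{-5d/4}$ near the origin. For the usual cutoff with $0\le\widehat\varphi\le 1$, pairing $\Lambda^{-d}\Delta_j u$ with a Gaussian gives terms of size $c\,2^{-jd/4}$, $c>0$, as $j\to-\infty$. So the series diverges in $\mathcal{S}'$.

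The inverse $\Lambda^{-\sigma}$ needs the same condition at the index $s$. The correct statement of ii) is therefore: $\Lambda^\sigma$ is an isomorphism $\dot B^s_{p,r}\to\dot B^{s-\sigma}_{p,r}$ when both $s$ and $s-\sigma$ are $<\frac dp$ (or $=\frac dp$ with $r=1$); otherwise one must work modulo polynomials. This costs the paper nothing, because it only applies ii) to $\nabla(-\Delta)^{-1}$ with target indices $-1+\frac dq$ and $-\frac12+\frac dq$, both below $\frac dq$. Your proof should still impose the restriction rather than assert convergence for free.

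\textbf{A smaller point on iv).} Your remark that H\"older alone does not suffice when $r<\infty$ is mistaken, and there is no obstacle here. Set $a_j=2^{js_1}\|\Delta_j u\|_{L^p}$ and $b_j=2^{js_2}\|\Delta_j u\|_{L^p}$. Then $2^{js}\|\Delta_j u\|_{L^p}=a_j^\theta b_j^{1-\theta}$ exactly. H\"older with exponents $r/\theta$ and $r/(1-\theta)$ gives $\|a^\theta b^{1-\theta}\|_{\ell^r}\le\|a^\theta\|_{\ell^{r/\theta}}\|b^{1-\theta}\|_{\ell^{r/(1-\theta)}}=\|a\|_{\ell^r}^\theta\|b\|_{\ell^r}^{1-\theta}$. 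This yields $C=1$, with no embedding $\ell^r\hookrightarrow\ell^{r/\theta}$ needed. Your alternative low/high frequency splitting is also correct and gives the sharper bound of $\|u\|_{\dot B^s_{p,1}}$ by the $\dot B^{s_i}_{p,\infty}$ norms, but only for $0<\theta<1$, with a constant depending on $\theta$ and $s_2-s_1$.
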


It is easy to extend the above interpolation inequality \eqref{interpolation in Besov spaces} to the Chemin-Lerner spaces:  for any  $s_{1}<s_{2}$,  $\theta\in [0,1]$, and $1\leq p, r,\rho, \rho_1,\rho_2\leq \infty$ with $\frac{1}{\rho}=\frac{1}{\rho_1}+\frac{1}{\rho_2}$,  there exists a constant $C$
such that
\begin{align}\label{interpolation in Chemin-Lerner spaces}
    \|u\|_{\widetilde{L}^{\rho}(\dot{B}^{s_{1}\theta+s_{2}(1-\theta)}_{p,r})}\leq C\|u\|_{\widetilde{L}^{\rho_1}(\dot{B}^{s_{1}}_{p,r})}^{\theta}\|u\|_{\widetilde{L}^{\rho_2}(\dot{B}^{s_{2}}_{p,r})}^{1-\theta}.
\end{align}
\smallbreak

Next, we also need to recall the Bony's decomposition from \cite{B81}:
\begin{equation}\label{Bony decomposition}
  fg=T_{f}g+T_{g}f+R(f,g),
\end{equation}
where  $T_{f}g:=\sum_{j\in\mathbb{Z}}S_{j-1}f\Delta_{j}g$ is the paraproduct of $f$ and $g$, $R(f,g):=\sum_{j\in\mathcal{Z}}\Delta_{j}f\widetilde{\Delta}_{j}g$ is
the remainder of $f$ and $g$ with
\begin{equation*}
\widetilde{\Delta}_{j}g:=\sum_{|j-j'|\leq 1}\Delta_{j'}g.
\end{equation*}
\smallbreak

Finally, let us prove the following crucial estimates for the product of two functions in the homogeneous Besov spaces and the Chemin-Lerner spaces.

\begin{lemma}\label{product estimates in Besov spaces}  Let $1\leq p_{1}, p_{2}\leq\infty$, $s_{1}<\frac{d}{p_1}$,
$s_2<\min\{\frac{d}{p_{1}}, \frac{d}{p_2}\}$ with
$s_1+s_2>d\max(0,\frac{1}{p_1}+\frac{1}{p_2}-1)$. Assume that $f\in
\dot{B}^{s_1}_{p_1,2}$,
$g\in\dot{B}^{s_2}_{p_2,2}$. Then
$fg\in\dot{B}^{s_1+s_2-\frac{d}{p_1}}_{p_2,2}$, and
there exists a positive constant $C$ such that
\begin{equation}\label{product estimates in Besov spaces1}
   \|fg\|_{\dot{B}^{s_1+s_2-\frac{d}{p_1}}_{p_2,2}}\leq C\|f\|_{\dot{B}^{s_1}_{p_1,2}}\|g\|_{\dot{B}^{s_2}_{p_2,2}}.
\end{equation}
\end{lemma}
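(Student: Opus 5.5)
We use the Bony decomposition \eqref{Bony decomposition}, $fg=T_fg+T_gf+R(f,g)$, and estimate each piece in $\dot{B}^{s_1+s_2-\frac{d}{p_1}}_{p_2,2}$ separately. Throughout we write $\sigma:=s_1+s_2-\frac{d}{p_1}$, and $(c_j)$, $(d_j)$ denote $\ell^2$ sequences of norm one with
\[
\|\Delta_jf\|_{L^{p_1}}\le c_j2^{-js_1}\|f\|_{\dot{B}^{s_1}_{p_1,2}},\qquad
\|\Delta_jg\|_{L^{p_2}}\le d_j2^{-js_2}\|g\|_{\dot{B}^{s_2}_{p_2,2}}.
\]

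\emph{The paraproduct $T_fg$.} By quasi-orthogonality, $\Delta_j(T_fg)$ involves only the terms $S_{k-1}f\Delta_kg$ with $|k-j|\le4$. By Bernstein's inequality (Lemma \ref{Bernstein inequality}),
\[
\|S_{k-1}f\|_{L^\infty}\le \sum_{k'\le k-2}2^{k'd/p_1}\|\Delta_{k'}f\|_{L^{p_1}}\lesssim \sum_{k'\le k-2}2^{k'(d/p_1-s_1)}c_{k'}\|f\|_{\dot{B}^{s_1}_{p_1,2}}.
\]
Since $s_1<\frac d{p_1}$, the exponent is positive, so this is $\lesssim 2^{k(d/p_1-s_1)}\tilde c_k\|f\|_{\dot{B}^{s_1}_{p_1,2}}$, where $\tilde c_k$ is a discrete convolution of $(c_{k'})$ with an $\ell^1$ kernel, hence again in $\ell^2$ (indeed in $\ell^\infty$, which is all we need). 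Multiplying by $\|\Delta_kg\|_{L^{p_2}}$ gives $2^{-k\sigma}d_k\|f\|\,\|g\|$, and summing over $|k-j|\le4$ yields the $\ell^2$ bound.

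\emph{The paraproduct $T_gf$.} Here the frequency-localized factor is $f$, measured in $L^{p_1}$, while the product must land in $L^{p_2}$. We estimate $\|S_{k-1}g\,\Delta_kf\|_{L^{p_2}}$ by Hölder's inequality with $\frac1{p_2}=\frac1{p_1}+\frac1{r}$ when $p_1\ge p_2$, putting $S_{k-1}g\in L^r$; Bernstein gives $\|\Delta_{k'}g\|_{L^r}\lesssim 2^{k'd(1/p_2-1/r)}\|\Delta_{k'}g\|_{L^{p_2}}=2^{k'd/p_1}\|\Delta_{k'}g\|_{L^{p_2}}$. When $p_1<p_2$, we instead put $S_{k-1}g\in L^\infty$ and use Bernstein on $\Delta_kf$ from $L^{p_1}$ to $L^{p_2}$, at cost $2^{kd(1/p_1-1/p_2)}$. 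In both cases the low-frequency sum converges because $s_2<\frac d{p_1}$ (respectively $s_2<\frac d{p_2}$), which explains the condition $s_2<\min\{\frac d{p_1},\frac d{p_2}\}$. Each case gives $2^{-k\sigma}c_k\|f\|\,\|g\|$.

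\emph{The remainder $R(f,g)$, the main obstacle.} Here $\Delta_j R=\sum_{k\ge j-3}\Delta_j(\Delta_kf\widetilde\Delta_kg)$, and one must use $s_1+s_2>0$ to sum the high frequencies. Let $\frac1m=\frac1{p_1}+\frac1{p_2}$. If $m\ge1$, Hölder's inequality puts $\Delta_kf\widetilde\Delta_kg$ in $L^m$, and Bernstein from $L^m$ to $L^{p_2}$ costs $2^{jd/p_1}$. If $\frac1{p_1}+\frac1{p_2}>1$, we first take the product in $L^1$ and then apply Bernstein from $L^1$ to $L^{p_2}$, costing $2^{jd(1-1/p_2)}$. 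In either case
\[
2^{j\sigma}\|\Delta_jR\|_{L^{p_2}}\lesssim \sum_{k\ge j-3}2^{-(k-j)(s_1+s_2-d\max(0,\frac1{p_1}+\frac1{p_2}-1))}c_kd_k\,\|f\|\,\|g\|.
\]
The exponent is positive by hypothesis, so this is a convolution of $(c_kd_k)\in\ell^1\subset\ell^2$ with an $\ell^1$ kernel, and Young's inequality gives the $\ell^2$ bound.

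Summing the three contributions proves \eqref{product estimates in Besov spaces1}. The only delicate points are the bookkeeping of Lebesgue exponents in $T_gf$ and in the remainder, where the hypotheses on $s_1$, $s_2$ and $s_1+s_2$ are used exactly.
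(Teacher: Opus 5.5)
Your proof is correct, but it does not follow the paper's route. The paper gives no paraproduct argument for this lemma. It quotes the $\ell^1$-version $\|fg\|_{\dot{B}^{s_1+s_2-\frac{d}{p_1}}_{p_2,1}}\leq C\|f\|_{\dot{B}^{s_1}_{p_1,1}}\|g\|_{\dot{B}^{s_2}_{p_2,1}}$ from Lemma 5.3 of \cite{ZL17}, and asserts that the $\ell^2$-version follows ``by H\"{o}lder's inequality and $\ell^2\hookrightarrow\ell^4$''.

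That reduction is short, but the upgrade is not immediate. The quoted bound at fixed indices only concerns data in the smaller spaces $\dot{B}^{s}_{p,1}\subsetneq\dot{B}^{s}_{p,2}$. Some further argument is therefore needed. One option is to rerun the Bony decomposition with $\ell^2$ sequences. Another is an extra device: apply the $\ell^1$ estimate to single dyadic blocks $\Delta_jf$, $\Delta_kg$ with $(s_1,s_2)$ replaced by $(s_1\pm\varepsilon,s_2\mp\varepsilon)$, which the strict hypotheses allow, and then sum the resulting factor $2^{-\varepsilon|j-k|}$ by Schur's test.

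Your argument carries out the first option in full. It is self-contained and shows exactly where each hypothesis is used:
\begin{itemize}
\item $s_1<\frac{d}{p_1}$ in $T_fg$;
\item $s_2<\min\{\frac{d}{p_1},\frac{d}{p_2}\}$ in $T_gf$, through the split $p_1\geq p_2$ versus $p_1<p_2$;
\item $s_1+s_2>d\max(0,\frac{1}{p_1}+\frac{1}{p_2}-1)$ in $R(f,g)$.
\end{itemize}
The sequence bookkeeping is also explicit: $\ell^\infty\cdot\ell^2\subset\ell^2$ for the two paraproducts, and $\ell^2\cdot\ell^2\subset\ell^1$ followed by Young's inequality for the remainder.
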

\begin{proof}
The proof is essentially coming from Lemma 5.3 in \cite{ZL17}, where the authors proved that, under the same assumptions on $p_1, p_2, s_1, s_2$, the following product estimate holds:
\begin{equation*}
   \|fg\|_{\dot{B}^{s_1+s_2-\frac{d}{p_1}}_{p_2,1}}\leq C\|f\|_{\dot{B}^{s_1}_{p_1,1}}\|g\|_{\dot{B}^{s_2}_{p_2,1}}.
\end{equation*}
This immediately yields \eqref{product estimates in Besov spaces1} by only using the H\"{o}lder's inequality and the fact $\ell^2\hookrightarrow\ell^4$.
\end{proof}

\begin{lemma}\label{product estimates in Chemin-Lerner spaces}  Let $1\leq p_{1}, p_{2}\leq\infty$, $s_{1}<\frac{d}{p_1}$,
$s_2<\min\{\frac{d}{p_{1}}, \frac{d}{p_2}\}$ with
$s_1+s_2>d\max(0,\frac{1}{p_1}+\frac{1}{p_2}-1)$, $1\leq \rho, \rho_1, \rho_2\leq \infty$ with $\frac{1}{\rho}=\frac{1}{\rho_1}+\frac{1}{\rho_2}$. Assume that $f\in
\widetilde{L}_T^{\rho_1}(\dot{B}^{s_1}_{p_1,2})$,
$g\in\widetilde{L}_T^{\rho_2}(\dot{B}^{s_2}_{p_2,2})$ for any $0<T\leq \infty$. Then
$fg\in\widetilde{L}_T^{\rho}(\dot{B}^{s_1+s_2-\frac{d}{p_1}}_{p_2,2})$, and
there exists a positive constant $C$ such that
\begin{equation}\label{product estimates in Chemin-Lerner spaces1}
   \|fg\|_{\widetilde{L}_T^{\rho}(\dot{B}^{s_1+s_2-\frac{d}{p_1}}_{p_2,2})}\leq C\|f\|_{\widetilde{L}_T^{\rho_1}(\dot{B}^{s_1}_{p_1,2})}\|g\|_{\widetilde{L}_T^{\rho_2}(\dot{B}^{s_2}_{p_2,2})}.
\end{equation}
\end{lemma}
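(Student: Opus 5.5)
We would prove Lemma \ref{product estimates in Chemin-Lerner spaces} by redoing the argument of Lemma \ref{product estimates in Besov spaces} at the level of dyadic blocks, keeping the time integration inside the $\ell^2$ sum as the Chemin--Lerner norm requires. The starting point is Bony's decomposition \eqref{Bony decomposition}, $fg=T_fg+T_gf+R(f,g)$. For each piece we localize with $\Delta_j$, estimate $\|\Delta_j(\cdot)\|_{L^{\rho}_T(L^{p_2})}$ by Hölder's inequality in time with $\frac1\rho=\frac1{\rho_1}+\frac1{\rho_2}$ and in space, and then take the weighted $\ell^2$ norm in $j$. Throughout, we write $f_{j'}:=2^{j's_1}\|\Delta_{j'}f\|_{L^{\rho_1}_T(L^{p_1})}$ and $g_{j'}:=2^{j's_2}\|\Delta_{j'}g\|_{L^{\rho_2}_T(L^{p_2})}$. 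Both sequences lie in $\ell^2$, with norms equal to the Chemin--Lerner norms on the right-hand side of \eqref{product estimates in Chemin-Lerner spaces1}.

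\emph{Paraproduct $T_fg$.} By quasi-orthogonality, $\Delta_j T_fg=\sum_{|j'-j|\le4}\Delta_j(S_{j'-1}f\Delta_{j'}g)$. We bound $S_{j'-1}f$ in $L^\infty$ by summing low blocks and using Bernstein's inequality \eqref{Bernstein inequality1}:
\[
\|S_{j'-1}f\|_{L^{\rho_1}_T(L^\infty)}\le C\sum_{k\le j'-2}2^{k(\frac d{p_1}-s_1)}f_k .
\]
Since $s_1<\frac d{p_1}$, this is $2^{j'(\frac d{p_1}-s_1)}$ times an $\ell^2$ sequence, by Young's inequality for the convolution with $2^{(k-j')(d/p_1-s_1)}\mathbf 1_{k<j'}\in\ell^1$. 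Pairing with $\|\Delta_{j'}g\|_{L^{\rho_2}_T(L^{p_2})}$ gives the block bound
\[
2^{j(s_1+s_2-\frac d{p_1})}\|\Delta_jT_fg\|_{L^\rho_T(L^{p_2})}\le C c_j g_j ,
\]
where $(c_j)\in\ell^2$ (indeed $\ell^\infty$ suffices) with norm $\lesssim\|f\|$. Hence the result lies in $\ell^2$.

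\emph{Paraproduct $T_gf$.} This term is handled symmetrically, but now $g$ is the low-frequency factor and $f\in L^{p_1}$. If $p_1\le p_2$, we put $S_{j'-1}g$ in $L^{\frac{p_1p_2}{p_2-p_1}}$, using Bernstein from $L^{p_2}$ and then Hölder into $L^{p_2}$. This produces the factor $2^{k(\frac d{p_1}-s_2)}$, which is summable since $s_2<\frac d{p_1}$. If $p_1>p_2$, we first embed $\Delta_{j'}f$ from $L^{p_1}$ into the suitable Lebesgue space via Bernstein and put $S_{j'-1}g$ in $L^\infty$, using $s_2<\frac d{p_2}$. This is exactly where the two conditions on $s_2$ enter.

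\emph{Remainder $R(f,g)$.} This is the main obstacle, and it is where $s_1+s_2>d\max(0,\frac1{p_1}+\frac1{p_2}-1)$ is used. Here $\Delta_jR(f,g)=\sum_{j'\ge j-3}\Delta_j(\Delta_{j'}f\widetilde\Delta_{j'}g)$. We take the product in $L^{m}$ with $\frac1m=\min(1,\frac1{p_1}+\frac1{p_2})$, and then use Bernstein to pass from $L^m$ to $L^{p_2}$ at frequency $2^j$. This yields
\[
2^{j(s_1+s_2-\frac d{p_1})}\|\Delta_jR\|_{L^\rho_T(L^{p_2})}\le C\sum_{j'\ge j-3}2^{(j-j')\sigma}f_{j'}\tilde g_{j'},\qquad \sigma=s_1+s_2-d\max\Big(0,\tfrac1{p_1}+\tfrac1{p_2}-1\Big)>0 .
\]
Since $f_{j'}\tilde g_{j'}\in\ell^1\subset\ell^2$ by Cauchy--Schwarz, Young's inequality with the $\ell^1$ kernel $2^{-k\sigma}\mathbf 1_{k\ge-3}$ closes this term in $\ell^2$.

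Summing the three contributions gives \eqref{product estimates in Chemin-Lerner spaces1}. The one point needing care is that Hölder in time must be applied blockwise before summing in $j$. We would verify that the low-frequency sums $\sum_k$ commute correctly with the $L^{\rho_1}_T$ norm, which follows from Minkowski's inequality since $\rho_1\ge1$.
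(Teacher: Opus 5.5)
Your strategy matches what the paper's one-line proof (``a direct corollary of Lemma~\ref{product estimates in Besov spaces}, with H\"older in time'') has in mind. The Chemin--Lerner norm puts the time integral inside the $\ell^2$ sum, so one really does have to rerun the Bony decomposition blockwise with H\"older in time, as you propose. Your treatment of $T_fg$ and $R(f,g)$ is fine.

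The step for $T_gf$, however, fails as written, because the two cases are interchanged.
\begin{itemize}
\item For $p_1<p_2$: H\"older with $S_{j'-1}g\in L^{\frac{p_1p_2}{p_2-p_1}}$ and $\Delta_{j'}f\in L^{p_1}$ places the product in $L^m$ with $\frac1m=\frac2{p_1}-\frac1{p_2}>\frac1{p_2}$, not in $L^{p_2}$. Also, Bernstein from $L^{p_2}$ into that space produces $2^{kd(\frac2{p_2}-\frac1{p_1})}$, not the factor $2^{k\frac d{p_1}}$ you quote.
\item For $p_1>p_2$: Bernstein only raises Lebesgue exponents. So no manipulation of $\Delta_{j'}f\in L^{p_1}$ multiplied by $S_{j'-1}g\in L^\infty$ can land in $L^{p_2}$.
\end{itemize}

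The correct split is the reverse one, exactly as in the paper's \eqref{convective term21}--\eqref{convective term22}.
\begin{itemize}
\item If $p_2<p_1$, use
\[
\|S_{j'-1}g\,\Delta_{j'}f\|_{L^{p_2}}\le\|S_{j'-1}g\|_{L^{\frac{p_1p_2}{p_1-p_2}}}\|\Delta_{j'}f\|_{L^{p_1}},\qquad
\|S_{j'-1}g\|_{L^{\frac{p_1p_2}{p_1-p_2}}}\le C\sum_{k\le j'-2}2^{k\frac{d}{p_1}}\|\Delta_k g\|_{L^{p_2}}.
\]
This requires $s_2<\frac d{p_1}$ and yields the factor $2^{k(\frac d{p_1}-s_2)}$ you wrote.
\item If $p_1\le p_2$, put $S_{j'-1}g$ in $L^\infty$, which requires $s_2<\frac d{p_2}$. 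Then raise $\Delta_{j'}f$ to $L^{p_2}$ at cost $2^{j'd(\frac1{p_1}-\frac1{p_2})}$, or apply Bernstein to the output block.
\end{itemize}
In either case exactly $s_2<\min\{\frac d{p_1},\frac d{p_2}\}$ is used, so your accounting of which hypothesis enters is right. Only the case labels and the conjugate exponent need swapping. With Minkowski for the low-frequency sums in $L^{\rho_2}_T$, the argument then closes.

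A smaller omission concerns $R(f,g)$ when $\frac1{p_1}+\frac1{p_2}>1$. There H\"older does not put $\Delta_{j'}f\,\widetilde{\Delta}_{j'}g$ in $L^1$ directly. You must first raise $\Delta_{j'}f$ from $L^{p_1}$ to $L^{p_2'}$ by Bernstein at frequency $2^{j'}$, at cost $2^{j'd(\frac1{p_1}+\frac1{p_2}-1)}$. This is precisely the loss built into your $\sigma$, so your formula is correct, but the step should be stated.
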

\begin{proof}
This is a direct corollary of Lemma \ref{product estimates in Besov spaces}, where the time integrable indices $\rho$, $\rho_1$ and $\rho_2$ obey the rule of the H\"{o}lder's inequality.
\end{proof}

\subsection{Wellposedness of the tranformed NS-NPP system}

 In this subsection, we prove the following well-posedness result to the tranformed NS-NPP  system  \eqref{tranformed NSNPP}.

\begin{theorem}\label{well-posedness in Besov spaces} Let $d\geq 3$, $p$ and $q$ be two positive numbers such that $1\leq p, q<d$, $q\leq 2p$ and
$\frac{1}{p}-\frac{1}{q}< \frac{1}{d}$.
Assume that $u_{0}\in
\dot{B}^{-1+\frac{d}{p}}_{p,2}$ with $\nabla\cdot
u_{0}=0$, $v_{0}, w_{0}\in
\dot{B}^{-2+\frac{d}{q}}_{q,2}$. Then there exists
$T>0$ such that the system \eqref{tranformed NSNPP} has a unique solution $(u,v,w)$ on $[0,T]$ satisfying
\begin{align*}
  u\in\widetilde{L}^{\infty}(0, T;
 \dot{B}^{-1+\frac{d}{p}}_{p,2})\cap \widetilde{L}^{2}(0, T;
  \dot{B}^{\frac{d}{p}}_{p,2}),\ \ \ \
  v,w\in  \widetilde{L}^{\infty}(0, T;
  \dot{B}^{-2+\frac{d}{q}}_{q,2})\cap \widetilde{L}^{2}(0, T;
\dot{B}^{-1+\frac{d}{q}}_{q,2}).
\end{align*}
Besides, there exists a positive constant $\varepsilon$ such that if
$$
\|u_{0}\|_{\dot{B}^{-1+\frac{d}{p}}_{p,2}}+\|(v_{0},
w_{0})\|_{\dot{B}^{-2+\frac{d}{q}}_{q,2}}
\leq\varepsilon,
$$
then the above assertion holds for $T=\infty$, i.e., the solution
$(u, v, w)$ is global.
\end{theorem}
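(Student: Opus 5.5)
The plan is a standard Banach fixed point argument for the Duhamel formulation of \eqref{tranformed NSNPP}:
\begin{align*}
u&=e^{t\Delta}u_0-\int_0^t e^{(t-s)\Delta}\mathbb{P}\big(u\cdot\nabla u+v\nabla(-\Delta)^{-1}v\big)ds,\\
v&=e^{t\Delta}v_0+\int_0^t e^{(t-s)\Delta}\big(-u\cdot\nabla v+\nabla\cdot(w\nabla(-\Delta)^{-1}v)\big)ds,\\
w&=e^{t\Delta}w_0+\int_0^t e^{(t-s)\Delta}\big(-u\cdot\nabla w+\nabla\cdot(v\nabla(-\Delta)^{-1}v)\big)ds.
\end{align*}
We work in the space $X_T$ of triples with finite norm
$$\|(u,v,w)\|_{X_T}:=\|u\|_{\widetilde{L}^{\infty}_T(\dot{B}^{-1+\frac dp}_{p,2})}+\|u\|_{\widetilde{L}^{2}_T(\dot{B}^{\frac dp}_{p,2})}+\|(v,w)\|_{\widetilde{L}^{\infty}_T(\dot{B}^{-2+\frac dq}_{q,2})}+\|(v,w)\|_{\widetilde{L}^{2}_T(\dot{B}^{-1+\frac dq}_{q,2})}.$$

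\textbf{Linear estimates.} By Proposition \ref{linear estimates of heat equation} with $\rho_1=2$ and the boundedness of $\mathbb{P}$, each Duhamel term is controlled in $X_T$ by the $\widetilde{L}^2_T$ norm of its forcing. For $u$ the forcing is measured in $\dot{B}^{-2+\frac dp}_{p,2}$; for $v,w$ it is measured in $\dot{B}^{-3+\frac dq}_{q,2}$, which after removing the divergence means $\dot{B}^{-2+\frac dq}_{q,2}$ for the flux terms.

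\textbf{Bilinear estimates.} This step is the main obstacle, and I would do it first. Each bound comes from Lemma \ref{product estimates in Chemin-Lerner spaces} with one factor in $\widetilde{L}^\infty$ and the other in $\widetilde{L}^2$; these are exactly Lemmas \ref{convective term in NS}, \ref{external force in NS}, \ref{convective term of w} and \ref{drift term of w} invoked in Section 2.
\begin{itemize}
\item For $u\cdot\nabla u$: take $f=u\in\widetilde{L}^2(\dot B^{\frac dp}_{p,2})$ and $g=\nabla u\in \widetilde{L}^\infty(\dot B^{-2+\frac dp}_{p,2})$. The regularities sum to $-2+\frac{2d}{p}>\max(0,\frac{2d}{p}-d)$ when $p<d$. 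Since $\dot B^{d/p}_{p,2}\not\hookrightarrow L^\infty$, the endpoint $s_1=\frac dp$ fails. I would instead split as $\nabla\cdot(u\otimes u)$ with both factors in the interpolated space $\widetilde L^4(\dot B^{-\frac12+\frac dp}_{p,2})$, using \eqref{interpolation in Chemin-Lerner spaces}.
\item For $u\cdot\nabla v=\nabla\cdot(uv)$: pair $u\in \widetilde L^4(\dot B^{-\frac12+\frac dp}_{p,2})$ with $v\in\widetilde L^4(\dot B^{-\frac32+\frac dq}_{q,2})$. The conditions $q\le 2p$, $\frac1p-\frac1q<\frac1d$ and $p,q<d$ are what make the hypotheses $s_1<\frac d{p_1}$, $s_2<\min\{\cdot\}$ and $s_1+s_2>\cdots$ hold.
\item For the terms $v\nabla(-\Delta)^{-1}v$ and $w\nabla(-\Delta)^{-1}v$: the factor $\nabla(-\Delta)^{-1}v$ gains one derivative, and we pair $\widetilde L^4(\dot B^{-\frac32+\frac dq}_{q,2})$ with $\widetilde L^4(\dot B^{-\frac12+\frac dq}_{q,2})$, landing in $\dot B^{-2+\frac dq}_{q,2}$. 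For the $u$ equation, the embedding $\dot B^{\cdot}_{q}\to\dot B^{\cdot}_{p}$ needs to be arranged, again relying on $q\le 2p$ and $\frac1p-\frac1q<\frac1d$.
\end{itemize}
Checking all index constraints of Lemma \ref{product estimates in Besov spaces} case by case is where I expect the real work; if an endpoint fails I would split via Bony's decomposition \eqref{Bony decomposition}.

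\textbf{Fixed point.} These estimates give
$$\|\Phi(z)\|_{X_T}\le \|z_L\|_{X_T}+C\|z\|_{Y_T}^2,\qquad Y_T:=\text{the }\widetilde L^4\text{ interpolated norms},$$
where $z_L$ is the free evolution, together with the analogous difference estimate. Let $z_L:=(e^{t\Delta}u_0,e^{t\Delta}v_0,e^{t\Delta}w_0)$.
\begin{itemize}
\item \emph{Global small data.} $\|z_L\|_{X_\infty}\le C\varepsilon$, so the map $\Phi$ contracts on a ball of radius $2C\varepsilon$ when $\varepsilon$ is small. This gives $T=\infty$.
\item \emph{Local large data.} Because the second index is $2<\infty$, dominated convergence shows $\|z_L\|_{Y_T}\to0$ as $T\to0$: one has $\big(\int_0^T\|\Delta_j e^{t\Delta}u_0\|^4dt\big)^{1/4}\lesssim \min(T^{1/4},2^{-j/2})\|\Delta_ju_0\|$ and the sequence is summable in $\ell^2$. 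So we set up the contraction in $Y_T$ around $z_L$, with radius small, and recover the $X_T$ bounds a posteriori from the linear estimates.
\end{itemize}
Uniqueness follows from the same difference estimate on a short interval, iterated.
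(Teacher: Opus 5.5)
Your architecture is the same as the paper's: Duhamel formulation, bilinear estimates, and a contraction in interpolated norms whose free-evolution part vanishes as $T\to0$. That short-time smallness device is a reasonable way to supply the large-data local theory, which the paper only delegates to its reference. The gap is the bilinear step, which you assert rather than verify, and which fails as you set it up.

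The theorem's hypotheses are calibrated for paraproduct bounds in which the \emph{low-frequency} factor is measured in $\widetilde L^\infty_T$ at its lowest regularity, as in Lemmas \ref{external force in NS} and \ref{convective term of w}. Moving that factor to $\widetilde L^4_T$ costs half a derivative at low frequency. The low-frequency sums then diverge on part of the admissible range, and Bony's decomposition alone does not help while both factors stay in $\widetilde L^4_T$.

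\emph{First failure: the force in the $u$-equation when $p<q$.} There is no embedding of $q$-based into $p$-based Besov spaces. Lemma \ref{product estimates in Besov spaces} keeps the output in $L^{p_2}=L^q$, so you must estimate directly in $L^p$, as in Case 2 of Lemma \ref{external force in NS}. There the paraproduct $\Delta_{j'}v\,\nabla(-\Delta)^{-1}S_{j'-1}v$ produces the sum $\sum_{k}2^{(1+\frac dq-\frac dp)k}$ in \eqref{eq4.7}, which converges exactly because $\frac1p-\frac1q<\frac1d$. With the low-frequency factor in $\widetilde L^4_T(\dot B^{-\frac32+\frac dq}_{q,2})$, it becomes $\sum_k2^{(\frac12+\frac dq-\frac dp)k}$. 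That requires $\frac1p-\frac1q<\frac1{2d}$, which fails for admissible data, e.g. $d=3$, $p=1$, $q=\frac75$. This is sharp, not an artifact of the method. Take $v=2^{\frac32k}\psi(2^kx)+2^{j(\frac32-\frac dq)}2^{k\frac dq}\psi(2^kx)\cos(2^jx_1)$, whose two pieces have unit size in your norm. For $j\gg k$, the frequency-$2^j$ part of $v\nabla(-\Delta)^{-1}v$ has $\dot B^{-2+\frac dp}_{p,2}$-size about $2^{(j-k)(\frac dp-\frac dq-\frac12)}$.

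\emph{Second failure: $u\cdot\nabla v$ and $u\cdot\nabla w$ when $q<p$.} Applying the product lemma to $uv$ needs $-\frac32+\frac dq<\frac dp$, i.e. $\frac1q-\frac1p<\frac3{2d}$. The hypotheses do not give this, since they bound $\frac1q-\frac1p$ only from below; take $d=3$, $q=1$, $p=\frac52$. The paper's \eqref{convective term21} instead keeps the non-divergence form, with $\nabla v$ as the low-frequency factor in $\widetilde L^\infty_T(\dot B^{-3+\frac dq}_{q,2})$. That needs only $\frac1q-\frac1p<\frac3d$, which is automatic for $d=3,4$ (though not implied by the stated hypotheses when $d\ge5$). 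Two $\widetilde L^4_T$ factors would need $\frac1q-\frac1p<\frac5{2d}$ even in non-divergence form, which fails for $d=4$, $q=1$, $p=\frac72$. So the contraction in your $Y_T$ cannot close on the full range.

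What rescues your large-data mechanism is an asymmetric pairing. In each paraproduct, put the low-frequency factor in $\widetilde L^\rho_T$ with $\rho<\infty$ large: that is, $\widetilde L^\rho_T(\dot B^{-2+\frac2\rho+\frac dq}_{q,2})$ for $v,w$ and $\widetilde L^\rho_T(\dot B^{-1+\frac2\rho+\frac dp}_{p,2})$ for $u$. Put the high-frequency factor in the corresponding space with exponent $\rho'$, where $\frac1\rho+\frac1{\rho'}=\frac12$. Use the divergence form (via $\operatorname{div}u=0$) only for the remainder. The admissible ranges then tend to the paper's as $\rho\to\infty$, and all these norms of the free evolution still vanish as $T\to0$.
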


To prove Theorem \ref{well-posedness in Besov spaces}, the crucial parts are the following bilinear estimates. In the sequel, we denote by $(d_{j})_{j\in\mathbb{Z}}$ a generic element of
$l^{2}(\mathbb{Z})$ such that $d_{j}\ge0$ and
$\sum_{j\in\mathbb{Z}}d_{j}^2=1$.
\begin{lemma}\label{convective term in NS}
Let $1\leq p<2d$. Then there exists a positive constant $C$ such that
\begin{align}\label{esimate of convective term}
   \|u\cdot\nabla
   u\|_{\widetilde{L}^{2}_{T}(\dot{B}^{-2+\frac{d}{p}}_{p,2})}\leq C\|u\|_{\widetilde{L}^{\infty}_{T}(\dot{B}^{-1+\frac{d}{p}}_{p,2})}
   \|u\|_{\widetilde{L}^{2}_{T}(\dot{B}^{\frac{d}{p}}_{p,2})}.
\end{align}
\end{lemma}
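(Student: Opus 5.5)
Since $\nabla\cdot u=0$, I will write $u\cdot\nabla u=\nabla\cdot(u\otimes u)$. By Lemma \ref{Properties} i) it then suffices to bound
\begin{equation*}
\|u\otimes u\|_{\widetilde{L}^{2}_{T}(\dot{B}^{-1+\frac{d}{p}}_{p,2})}.
\end{equation*}
The plan is to apply the Chemin--Lerner product estimate, Lemma \ref{product estimates in Chemin-Lerner spaces}, with $f=u\in\widetilde{L}^{2}_{T}(\dot{B}^{\frac{d}{p}}_{p,2})$ and $g=u\in\widetilde{L}^{\infty}_{T}(\dot{B}^{-1+\frac{d}{p}}_{p,2})$, taking $\rho_1=2$, $\rho_2=\infty$, $\rho=2$. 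The output regularity would be $s_1+s_2-\frac{d}{p_1}$, with $p_1=p_2=p$, and this equals $-1+\frac{d}{p}$, as needed.

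The difficulty is that the hypothesis $s_1<\frac{d}{p_1}$ of that lemma fails at the endpoint $s_1=\frac{d}{p}$. So I would not use the lemma as a black box. Instead I will do the Bony decomposition \eqref{Bony decomposition} by hand and put the factor carrying the high regularity in $L^\infty$ whenever it sits in the low-frequency position.

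\emph{Paraproduct terms.} For $T_u u$, with the low-frequency factor taken in $\widetilde L^2_T(\dot B^{d/p}_{p,2})$, I will use
\begin{equation*}
\|S_{j-1}u\|_{L^2_T(L^\infty)}\le\sum_{k\le j-2}2^{k\frac dp}\|\Delta_k u\|_{L^2_TL^p}\lesssim \sum_{k\le j-2}d_k\|u\|_{\widetilde L^2_T(\dot B^{\frac dp}_{p,2})}.
\end{equation*}
This is a non-decaying sum in $k$, so it fails. The fix is to swap roles. For the low-frequency factor I use the $\widetilde L^\infty_T(\dot B^{-1+d/p}_{p,2})$ norm, which gives
\begin{equation*}
\|S_{j-1}u\|_{L^\infty_TL^\infty}\lesssim 2^{j}\|u\|_{\widetilde L^\infty_T(\dot B^{-1+\frac dp}_{p,2})},
\end{equation*}
a geometric sum since the exponent is $1>0$. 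For the high-frequency factor I use
\begin{equation*}
\|\Delta_j u\|_{L^2_TL^p}\lesssim d_j2^{-j\frac dp}\|u\|_{\widetilde L^2_T(\dot B^{\frac dp}_{p,2})}.
\end{equation*}
Multiplying gives $2^{j(1-\frac dp)}d_j$, which is exactly the $\dot B^{-1+\frac dp}_{p,2}$ scaling. Because both paraproducts involve the same function $u$, this assignment works for $T_uu$ and the symmetric term alike.

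\emph{Remainder.} For $R(u,u)$, I put $\Delta_k u\,\widetilde\Delta_k u$ in $L^{p/2}$ if $p\ge2$ (or in $L^1$), using $L^\infty_T$ on one factor and $L^2_T$ on the other. I then apply Bernstein's inequality \eqref{Bernstein inequality1} to return to $L^p$, at a cost of $2^{j\frac dp}$, and sum over $k\ge j-2$. The resulting series is $\sum_{k\ge j-2}2^{(j-k)(\frac{2d}{p}-1)}d_k$, and it converges precisely when $\frac{2d}{p}-1>0$, i.e.\ $p<2d$. This is where the hypothesis of the lemma enters. For $p<2$ the same argument runs with the $L^1$ bound and $2^{jd(1-\frac1p)}$, giving exponent $\frac dp+d(1-\frac1p)-1>0$. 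Young's inequality for series then yields an $\ell^2$ sequence in $j$.

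I expect the main obstacle to be this remainder summation together with correctly assigning the time norms in the paraproduct, so that no endpoint sum diverges. Everything else is routine Bernstein and Hölder bookkeeping. Collecting the three pieces gives \eqref{esimate of convective term}.
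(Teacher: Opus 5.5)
Your proof is correct, but it takes a different route from the paper. The paper also writes $u\cdot\nabla u=\nabla\cdot(u\otimes u)$. It then avoids the endpoint $s_1=\frac{d}{p}$ you identified by splitting the regularity symmetrically. It applies Lemma \ref{product estimates in Chemin-Lerner spaces} with $p_1=p_2=p$, $s_1=s_2=-\frac12+\frac{d}{p}<\frac{d}{p}$ and $\rho_1=\rho_2=4$, which gives $\|u\otimes u\|_{\widetilde{L}^{2}_{T}(\dot{B}^{-1+\frac{d}{p}}_{p,2})}\le C\|u\|^{2}_{\widetilde{L}^{4}_{T}(\dot{B}^{-\frac12+\frac{d}{p}}_{p,2})}$. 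It then concludes with the interpolation inequality \eqref{interpolation in Chemin-Lerner spaces} between $\widetilde{L}^{\infty}_{T}(\dot{B}^{-1+\frac{d}{p}}_{p,2})$ and $\widetilde{L}^{2}_{T}(\dot{B}^{\frac{d}{p}}_{p,2})$. On that route, $p<2d$ enters as the product-lemma condition $s_1+s_2=-1+\frac{2d}{p}>0$, the same constraint you meet in your remainder sum. For $p<2$ the lemma's condition reduces to $d>1$, matching your exponent $d-1$.

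The paper's proof is two lines once the product lemma is granted. Yours redoes the paraproduct calculus at the dyadic level with the time norms placed inside. It therefore does not rely on Lemma \ref{product estimates in Chemin-Lerner spaces}, whose proof in the paper is only asserted as a corollary of the time-independent estimate. It also makes explicit which factor must carry which time norm. That asymmetric bookkeeping transfers directly to the bilinear term $u\cdot\nabla u_{\lambda,f}$ in Step 3. There it yields a sum of two terms, each pairing one norm of $u$ (hence a factor $\eta$) with one norm of $u_{\lambda,f}$, which is enough for the absorption argument.
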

\begin{proof}
For the term $u\cdot \nabla u$, by using the product estimate \eqref{product estimates in Chemin-Lerner spaces1} and the interpolation inequality \eqref{interpolation in Chemin-Lerner spaces}, we  see that
\begin{align*}
   \|u\cdot\nabla u\|_{\widetilde{L}^{2}_{T}(\dot{B}^{-2+\frac{d}{p}}_{p,2})}&=\|\nabla\cdot (u\otimes u)\|_{\widetilde{L}^{2}_{T}(\dot{B}^{-2+\frac{d}{p}}_{p,2})}\leq C\|u\otimes u\|_{\widetilde{L}^{2}_{T}(\dot{B}^{-1+\frac{d}{p}}_{p,2})}\\
   &\leq C  \|u\|_{\widetilde{L}^{4}_{T}(\dot{B}^{-\frac{1}{2}+\frac{d}{p}}_{p,2})}^2 \leq C\|u\|_{\widetilde{L}^{\infty}_{T}(\dot{B}^{-1+\frac{d}{p}}_{p,2})}
   \|u\|_{\widetilde{L}^{2}_{T}(\dot{B}^{\frac{d}{p}}_{p,2})}.
\end{align*}
We complete the proof of Lemma \ref{convective term in NS}.
\end{proof}

\begin{lemma}\label{external force in NS}
Let $1\leq p<d$, $1\leq q\leq 2p$ and $\frac{1}{p}-\frac{1}{q}<\frac{1}{d}$. Then there exists a positive constant $C$ such that
\begin{align}\label{esimate of external force of u}
  \|v\nabla(-\Delta)^{-1}v\|_{\widetilde{L}^{2}_{T}(\dot{B}^{-2+\frac{d}{p}}_{p,2})}&\leq C   \|v\|_{\widetilde{L}^{2}_{T}(\dot{B}^{-1+\frac{d}{q}}_{q,2})}\|v\|_{\widetilde{L}^{\infty}_{T}(\dot{B}^{-2+\frac{d}{q}}_{q,2})}.
\end{align}
\end{lemma}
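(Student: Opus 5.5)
We apply the Chemin--Lerner product estimate, Lemma \ref{product estimates in Chemin-Lerner spaces}, with the time exponents $\rho=2$, $\rho_1=2$, $\rho_2=\infty$. The factor $v$ is measured in $\widetilde{L}^{2}_{T}(\dot{B}^{-1+\frac{d}{q}}_{q,2})$. The factor $\nabla(-\Delta)^{-1}v$ is measured in $\widetilde{L}^{\infty}_{T}(\dot{B}^{-1+\frac{d}{q}}_{q,2})$, since by Lemma \ref{Properties} i)--ii) the operator $\nabla(-\Delta)^{-1}$ lowers regularity by one. This gives
\begin{equation*}
\|\nabla(-\Delta)^{-1}v\|_{\widetilde{L}^{\infty}_{T}(\dot{B}^{-1+\frac{d}{q}}_{q,2})}\leq C\|v\|_{\widetilde{L}^{\infty}_{T}(\dot{B}^{-2+\frac{d}{q}}_{q,2})}.
\end{equation*}
The output space is $\dot{B}^{-2+\frac{d}{p}}_{p,2}$ with integrability $p$, while both factors live at integrability $q$. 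So before applying the product lemma, we embed one factor into an $L^p$-based space using Lemma \ref{Properties} iii). Any loss of the form $\dot{B}^{s}_{q,2}\hookrightarrow\dot{B}^{s-d(\frac1q-\frac1p)}_{p,2}$ preserves the scaling, so we only need to arrange the exponents correctly.

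\emph{Case $q\leq p$.} We embed $v\in\dot{B}^{-1+\frac{d}{q}}_{q,2}\hookrightarrow\dot{B}^{-1+\frac{d}{p}}_{p,2}$. Lemma \ref{product estimates in Besov spaces} is then applied with $p_1=q$, $s_1=-1+\frac dq$ (the factor $\nabla(-\Delta)^{-1}v$) and $p_2=p$, $s_2=-1+\frac dp$. The output regularity is $s_1+s_2-\frac d{q}=-2+\frac dp$, as required. The hypotheses are checked as follows:
\begin{itemize}
\item $s_1<\frac dq$ is trivial.
\item $s_2<\min\{\frac dq,\frac dp\}$ holds because $-1+\frac dp<\frac dq$, which is exactly $\frac1p-\frac1q<\frac1d$.
\item $s_1+s_2=-2+\frac dq+\frac dp>d\max(0,\frac1q+\frac1p-1)$. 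The positivity part follows from $q\le p<d$, which gives $\frac dq+\frac dp>2$. The other part is $-2>-d$, which holds since $d\ge3$.
\end{itemize}

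\emph{Case $p<q\leq 2p$.} Embedding into $L^p$ is impossible here, so we instead take the output integrability $p_2=p$ through the roles of $p_1,p_2$. We write $\frac1{p}=\frac1{q}+\frac1{r}$ with $r\ge q$ (possible since $q\le 2p$). We put $\nabla(-\Delta)^{-1}v$ into $\dot{B}^{-1+\frac dq}_{q,2}$ as the $p_1$-factor. For the other factor we need a space with integrability index $p$ after the product. Since Lemma \ref{product estimates in Besov spaces} requires the output index to equal $p_2$, we plan first to apply the lemma at index $p_2=q$, with output in $\dot{B}^{-3+\frac{2d}{q}}_{q,2}$. We then need a further embedding into $\dot{B}^{-2+\frac dp}_{p,2}$, which runs in the wrong direction ($q>p$).

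So we instead go back to the paraproduct proof behind Lemma \ref{product estimates in Besov spaces}. Using the Bony decomposition \eqref{Bony decomposition}, we estimate each dyadic block by H\"older with $L^q\cdot L^q\subset L^{q/2}$, and then Bernstein from $L^{q/2}$ to $L^{p}$; this is allowed because $q/2\le p$, precisely the hypothesis $q\le2p$. The paraproducts $T_fg$ and $T_gf$ need the low-frequency sums to converge, which requires $-1+\frac dq<\frac dq$; this always holds. The remainder $R(f,g)$ needs $s_1+s_2>0$ together with the Bernstein gain. The two conditions $p<d$ and $\frac1p-\frac1q<\frac1d$ ensure the final index $-2+\frac dp$ lies in the admissible range.

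Time exponents are handled block by block via H\"older ($\frac12=\frac12+0$), exactly as in Lemma \ref{product estimates in Chemin-Lerner spaces}. The $\ell^2$ summation over $j$ uses the convolution-type bounds with $(d_j)$ sequences.

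I expect the main obstacle to be this second case, the mismatch $q>p$. There the off-the-shelf Lemma \ref{product estimates in Besov spaces} does not apply directly, and one has to redo the Bony decomposition with the $L^{q/2}\to L^p$ Bernstein step. The most delicate piece is checking the convergence condition for the remainder.
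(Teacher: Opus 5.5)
Your case $q\le p$ is fine. The product lemma applied to $\partial_m(-\Delta)^{-1}v\cdot v$ with $p_1=q$, $s_1=-1+\frac dq$, $p_2=p$, $s_2=-1+\frac dp$ satisfies all its hypotheses; only your time labels are swapped ($\rho_1=\infty$ goes with $\nabla(-\Delta)^{-1}v$, $\rho_2=2$ with $v$). This is more direct than the paper, which uses the divergence form $v\nabla(-\Delta)^{-1}v=\nabla\cdot(U\otimes U-\frac12|U|^2I)$ with $U=\nabla(-\Delta)^{-1}v$ and then the convective-term lemma.

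The case $p<q\le 2p$, however, does not close, starting with the \emph{paraproducts}. If you H\"older $L^q\cdot L^q\subset L^{q/2}$ and then apply Bernstein to the product, the low-frequency factor enters as $\sum_{k\le j'-2}\|\Delta_k f\|_{L^q}$. That sum converges only when $f$ has negative regularity. For $f=\nabla(-\Delta)^{-1}v$ the regularity is $-1+\frac dq$ (or $\frac dq$ in $\widetilde{L}^2_T$), which is positive for every $q<d$, i.e.\ in the whole range of the main theorem. So $T_{\nabla(-\Delta)^{-1}v}v$ is not controlled. The condition you quote, $-1+\frac dq<\frac dq$, is the one for putting the low-frequency factor in $L^\infty$; but then the block lands in $L^q$, and Bernstein cannot bring it down to $L^p$ with $p<q$. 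The working scheme, which is the paper's, applies Bernstein to the low-frequency factor alone: $\|\Delta_k f\|_{L^{\frac{pq}{q-p}}}\le C2^{kd(\frac2q-\frac1p)}\|\Delta_k f\|_{L^q}$. This is legitimate because $\frac{pq}{q-p}\ge q$ exactly when $q\le 2p$. One then uses H\"older $L^{\frac{pq}{q-p}}\times L^q\subset L^p$ with the low-frequency factor in $L^\infty_T$. The low-frequency sums become $\sum_{k\le j'}2^{k(2+\frac dq-\frac dp)}$ for $S_{j'-1}v$ and $\sum_{k\le j'}2^{k(1+\frac dq-\frac dp)}$ for $S_{j'-1}\nabla(-\Delta)^{-1}v$. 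The latter converges precisely under $\frac1p-\frac1q<\frac1d$; your sketch never uses this hypothesis at the place where it is needed.

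The \emph{remainder} also fails. Your generic bound needs $s_1+s_2=-2+\frac{2d}{q}>0$, i.e.\ $q<d$, which the lemma does not assume. Its hypotheses allow $q\ge d$ (e.g.\ $d=3$, $p=\frac52$, $q=4$), and then $\sum_{j'\ge j}2^{j'(2-\frac{2d}{q})}$ diverges. The missing idea is the symmetric structure. The paper writes the remainder of $v\nabla(-\Delta)^{-1}w+w\nabla(-\Delta)^{-1}v$ as $J_1+J_2+J_3$ with $-\Delta$, $\nabla\cdot$, $\partial_m$ pulled outside. This gives the factors $2^{2j}\sum_{j'\ge j-N_0}2^{-\frac dp j'}$ and $2^{j}\sum_{j'\ge j-N_0}2^{(1-\frac dp)j'}$, which are summable using only $p<d$. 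Working with the global divergence form $U\otimes U$ would also do, requiring only $q<2d$. Even when $q<d$, there is a further issue for $q<2$: $L^{q/2}$ is only quasi-normed, so the Young/Bernstein step for $\Delta_j$ from $L^{q/2}$ is unavailable. You must route through $L^1$ instead (Bernstein $L^q\to L^{q'}$ on one factor), as the paper does for $R(w,\nabla(-\Delta)^{-1}v)$ when $1\le q<2$.
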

\begin{proof} We divide the proof into the following two cases.
\smallbreak

\textit{Case 1, $1\leq q\leq p$.}  In this case, based on the observation
\begin{equation}\label{symmetric structure of v}
    v\nabla(-\Delta)^{-1}v=\nabla\cdot\big(  \nabla(-\Delta)^{-1}v\otimes\nabla(-\Delta)^{-1}v-\frac{1}{2}\left|\nabla(-\Delta)^{-1}v\right|^2 I\big)
\end{equation}
and the embedding relation $\dot{B}^{-2+\frac{d}{q}}_{q,2}\hookrightarrow\dot{B}^{-2+\frac{d}{p}}_{p,2}$, we can treat
$ \nabla(-\Delta)^{-1}v$ the same as $u$ to get the desired inequality \eqref{esimate of external force of u}.
\smallbreak

\textit{Case 2, $1\leq p<q$.} This is a tricky case. We first regard the term $ v\nabla(-\Delta)^{-1}v$ as the particular case of the following nonlinear term with symmetric structure:
\begin{equation*}
    v\nabla(-\Delta)^{-1}w+w\nabla(-\Delta)^{-1}v.
\end{equation*}
Then we resort to Bony's paraproduct decomposition \eqref{Bony decomposition} to get
\begin{equation}\label{eq4.5}
    v\nabla(-\Delta)^{-1}w+w\nabla(-\Delta)^{-1}v:=I_{1}+I_{2}+I_{3},
\end{equation}
where
\begin{align*}
    I_{1}:&=\sum_{j'\in\mathbb{Z}}S_{j'-1}v\nabla(-\Delta)^{-1}\Delta_{j'}w
    +S_{j'-1}w\nabla(-\Delta)^{-1}\Delta_{j'}v,\\
    I_{2}:&=\sum_{j'\in\mathbb{Z}}\Delta_{j'}v\nabla(-\Delta)^{-1}S_{j'-1}w
    +\Delta_{j'}w\nabla(-\Delta)^{-1}S_{j'-1}v,\\
   I_{3}:&=\sum_{j'\in\mathbb{Z}}\Delta_{j'}v\nabla(-\Delta)^{-1}\widetilde{\Delta}_{j'}w
    +\Delta_{j'}w\nabla(-\Delta)^{-1}\widetilde{\Delta}_{j'}v.
\end{align*}
For $I_{1}$, it suffices to deal with the first term  $\sum_{j'\in\mathbb{Z}}S_{j'-1}v\nabla(-\Delta)^{-1}\Delta_{j'}w$ due to the second one can be done analogously. Using  Lemma \ref{Bernstein inequality}, we get that
\begin{align*}
   \|\Delta_{j}\sum_{j'\in\mathbb{Z}}S_{j'-1}v\nabla(-\Delta)^{-1}\Delta_{j'}w\|_{L^2_T(L^{p})} &\leq C \sum_{|j-j'|\leq 4}\|S_{j'-1}v\|_{L^\infty_T(L^{\frac{pq}{q-p}})}\|\nabla(-\Delta)^{-1}\Delta_{j'}w\|_{L^2_T(L^{q})}\nonumber\\
    &\leq C \sum_{|j-j'|\leq 4}\sum_{k\leq j'-2}2^{(\frac{2d}{q}-\frac{d}{p})k}\|\Delta_{k}v\|_{L^\infty_T(L^{q})}2^{-j'}\|\Delta_{j'}w\|_{L^2_T(L^{q})}\nonumber\\
  & \leq C \sum_{|j-j'|\leq 4}\sum_{k\leq j'-2}2^{(2+\frac{d}{q}-\frac{d}{p})k}2^{(-2+\frac{d}{q})k}\|\Delta_{k}v\|_{L^\infty_T(L^{q})}2^{-j'}\|\Delta_{j'}w\|_{L^2_T(L^{q})}\nonumber\\
  &\leq C \sum_{|j-j'|\leq 4}2^{(1+\frac{d}{q}-\frac{d}{p})j'}\|\Delta_{j'}w\|_{L^2_T(L^{q})}\|v\|_{\widetilde{L}^\infty_T(\dot{B}^{-2+\frac{d}{q}}_{q,2})}\nonumber\\
  &\leq C 2^{(2-\frac{d}{p})j}d_{j}\|w\|_{\widetilde{L}^2_T(\dot{B}^{-1+\frac{d}{q}}_{q,2})}\|v\|_{\widetilde{L}^\infty_T(\dot{B}^{-2+\frac{d}{q}}_{q,2})},
\end{align*}
which directly leads to
\begin{align}\label{eq4.6}
  \|\Delta_{j}I_{1}\|_{L^2_T(L^{p})}\leq C
       2^{(2-\frac{d}{p})j}d_{j}\Big(\|w\|_{\widetilde{L}^2_T(\dot{B}^{-1+\frac{d}{q}}_{q,2})}\|v\|_{\widetilde{L}^\infty_T(\dot{B}^{-2+\frac{d}{q}}_{q,2})}
       +\|v\|_{\widetilde{L}^2_T(\dot{B}^{-1+\frac{d}{q}}_{q,2})}\|w\|_{\widetilde{L}^\infty_T(\dot{B}^{-2+\frac{d}{q}}_{q,2})} \Big).
\end{align}
Similarly, for the first term of $I_{2}$,  we get
\begin{align*}
    \|\Delta_{j}\sum_{j'\in\mathbb{Z}}\Delta_{j'}v\nabla(-\Delta)^{-1}S_{j'-1}w\|_{L^2_T(L^{p})} &\leq C  \sum_{|j-j'|\leq 4}\|\Delta_{j'}v\|_{L^2_T(L^{q})}\|\nabla(-\Delta)^{-1}S_{j'-1}w\|_{L^\infty_T(L^{\frac{pq}{q-p}})}\nonumber\\
     & \leq C \sum_{|j-j'|\leq 4}\|\Delta_{j'}v\|_{L^2_T(L^{q})}\sum_{k\leq j'-2}2^{(-1+\frac{2d}{q}-\frac{d}{p})k}\|\Delta_{k}w\|_{L^\infty_T(L^{q})}\nonumber\\
  &\leq C  \sum_{|j-j'|\leq 4}\|\Delta_{j'}v\|_{L^2_T(L^{q})}\sum_{k\leq j'-2}2^{(1+\frac{d}{q}-\frac{d}{p})k}2^{(-2+\frac{d}{q})k}\|\Delta_{k}w\|_{L^\infty_T(L^{q})}\nonumber\\
  &\leq C \sum_{|j-j'|\leq 4}2^{(1+\frac{d}{q}-\frac{d}{p})j'}\|\Delta_{j'}v\|_{L^2_T(L^{q})}\|w\|_{\widetilde{L}^\infty_T(\dot{B}^{-2+\frac{d}{q}}_{q,2})}\nonumber\\
  &\leq C  2^{(2-\frac{d}{p})j}d_{j}\|v\|_{\widetilde{L}^2_T(\dot{B}^{-1+\frac{d}{q}}_{q,2})}\|w\|_{\widetilde{L}^\infty_T(\dot{B}^{-2+\frac{d}{q}}_{q,2})},
\end{align*}
which yields that
\begin{align}\label{eq4.7}
     \|\Delta_{j}I_{2}\|_{L^2_T(L^{p})}\leq C  2^{(2-\frac{d}{p})j}d_{j}\Big(\|v\|_{\widetilde{L}^2_T(\dot{B}^{-1+\frac{d}{q}}_{q,2})}\|w\|_{\widetilde{L}^\infty_T(\dot{B}^{-2+\frac{d}{q}}_{q,2})}
     +\|w\|_{\widetilde{L}^2_T(\dot{B}^{-1+\frac{d}{q}}_{q,2})}\|v\|_{\widetilde{L}^\infty_T(\dot{B}^{-2+\frac{d}{q}}_{q,2})}\Big).
\end{align}
Finally we tackle with the most tricky term $I_{3}$. As we observed in \cite{ZL17}, we can split $I_{3}$ into the following three terms for $m=1,2,3$:
\begin{equation}\label{eq4.8}
  I_{3}:=J_{1}+J_{2}+J_{3},
\end{equation}
where
\begin{align*}
    J_{1}:&=\sum_{j'\in\mathbb{Z}}(-\Delta)\Big{\{}\big{(}(-\Delta)^{-1}\Delta_{j'}v\big{)}\big{(}\partial_{m}(-\Delta)^{-1}\widetilde{\Delta}_{j'}w\big{)}\Big{\}},\\
    J_{2}:&=\sum_{j'\in\mathbb{Z}}2\nabla\cdot\Big{\{}\big{(}(-\Delta)^{-1}\Delta_{j'}v\big{)}\big{(}\partial_{m}\nabla(-\Delta)^{-1}\widetilde{\Delta}_{j'}w\big{)}\Big{\}},\\
    J_{3}:&=\sum_{j'\in\mathbb{Z}}\partial_{m}\Big{\{}\big{(}(-\Delta)^{-1}\Delta_{j'}v\big{)}\widetilde{\Delta}_{j'}w\Big{\}}.
\end{align*}
Since $J_{2}$ can be estimated similarly to $J_{3}$, we deal with $J_{1}$ and $J_{3}$
only. It follows from the condition $1\leq p<d$ that
\begin{align*}
    \|\Delta_{j}J_{1}\|_{L^2_T(L^{p})}&\leq C  2^{2j}\sum_{j'\geq j-N_{0}}
    \|(-\Delta)^{-1}\Delta_{j'}v\|_{L^\infty_T(L^{\frac{pq}{q-p}})}\|\partial_{m}(-\Delta)^{-1}\widetilde{\Delta}_{j'}w\|_{L^2_T(L^{q})}\nonumber\\
    &\leq C  2^{2j}\sum_{j'\geq j-N_{0}}2^{(-2+\frac{2d}{q}-\frac{d}{p})j'}
    \|\Delta_{j'}v\|_{L^\infty_T(L^{q})}2^{-j'}\|\widetilde{\Delta}_{j'}w\|_{L^2_T(L^{q})}\nonumber\\
    &\leq C  2^{2j}\sum_{j'\geq j-N_{0}}2^{-\frac{d}{p}j'}2^{(-2+\frac{d}{q})j'}
    \|\Delta_{j'}v\|_{L^\infty_T(L^{q})}2^{(-1+\frac{d}{q})j'}\|\widetilde{\Delta}_{j'}w\|_{L^2_T(L^{q})}\nonumber\\
    &\leq C  2^{(2-\frac{d}{p})j}d_{j}\|w\|_{\widetilde{L}^2_T(\dot{B}^{-1+\frac{d}{q}}_{q,2})}\|v\|_{\widetilde{L}^\infty_T(\dot{B}^{-2+\frac{d}{q}}_{q,2})},
\end{align*}
\begin{align*}
    \|\Delta_{j}J_{3}\|_{L^2_T(L^{p})}&\leq C  2^{j}\sum_{j'\geq j-N_{0}}
    \|(-\Delta)^{-1}\Delta_{j'}v\|_{L^\infty_T(L^{\frac{pq}{q-p}})}\|\widetilde{\Delta}_{j'}w\|_{L^2_T(L^{q})}\nonumber\\
   &\leq C  2^{j}\sum_{j'\geq j-N_{0}}2^{(1-\frac{d}{p})j'}2^{(-2+\frac{d}{q})j'}
    \|\Delta_{j'}v\|_{L^\infty_T(L^{q})}2^{(-1+\frac{d}{q})j'}\|\widetilde{\Delta}_{j'}w\|_{L^2_T(L^{q})}\nonumber\\
    &\leq C  2^{(2-\frac{d}{p})j}d_{j}\|w\|_{\widetilde{L}^2_T(\dot{B}^{-1+\frac{d}{q}}_{q,2})}\|v\|_{\widetilde{L}^\infty_T(\dot{B}^{-2+\frac{d}{q}}_{q,2})}.
\end{align*}
As a consequence,  we deduce from \eqref{eq4.8} that
\begin{align}\label{eq4.9}
      \|\Delta_{j} I_{3}\|_{L^2_T(L^{p})}\leq C
      2^{(2-\frac{d}{p})j}d_{j}\|w\|_{\widetilde{L}^2_T(\dot{B}^{-1+\frac{d}{q}}_{q,2})}\|v\|_{\widetilde{L}^\infty_T(\dot{B}^{-2+\frac{d}{q}}_{q,2})}.
\end{align}
Hence, plugging \eqref{eq4.6}, \eqref{eq4.7} and \eqref{eq4.9} into \eqref{eq4.5}, we obtain \eqref{esimate of external force of u}. We complete the proof of Lemma \ref{external force in NS}.
\end{proof}

\begin{lemma}\label{convective term of w}
Let $1\leq p,q<\infty$ and $\frac{1}{q}-\frac{1}{p}<\frac{3}{d}$,
$\frac{1}{p}+\frac{1}{q}>\frac{2}{d}$. Then there exists a positive constant $C$ such that
\begin{align}\label{esimate of convective term of w}
   \|u\cdot\nabla
   v\|_{\widetilde{L}^{2}_{T}(\dot{B}^{-3+\frac{d}{q}}_{q,2})}\leq C\Big( \|u\|_{\widetilde{L}^{\infty}_{T}(\dot{B}^{-1+\frac{d}{p}}_{p,2})}
   \|v\|_{\widetilde{L}^{2}_{T}(\dot{B}^{-1+\frac{d}{q}}_{q,2})}+\|u\|_{\widetilde{L}^{2}_{T}(\dot{B}^{\frac{d}{p}}_{p,2})}
   \|v\|_{\widetilde{L}^{\infty}_{T}(\dot{B}^{-2+\frac{d}{q}}_{q,2})}\Big).
\end{align}
\end{lemma}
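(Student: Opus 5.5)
Since $\nabla\cdot u=0$, we write $u\cdot\nabla v=\nabla\cdot(uv)$. By Lemma \ref{Properties} i) it then suffices to prove
\begin{equation*}
\|uv\|_{\widetilde{L}^{2}_{T}(\dot{B}^{-2+\frac{d}{q}}_{q,2})}\leq C\Big(\|u\|_{\widetilde{L}^{\infty}_{T}(\dot{B}^{-1+\frac{d}{p}}_{p,2})}\|v\|_{\widetilde{L}^{2}_{T}(\dot{B}^{-1+\frac{d}{q}}_{q,2})}+\|u\|_{\widetilde{L}^{2}_{T}(\dot{B}^{\frac{d}{p}}_{p,2})}\|v\|_{\widetilde{L}^{\infty}_{T}(\dot{B}^{-2+\frac{d}{q}}_{q,2})}\Big).
\end{equation*}
We apply Bony's decomposition \eqref{Bony decomposition}, $uv=T_{u}v+T_{v}u+R(u,v)$, and estimate each piece dyadically in $L^2_T(L^q)$. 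The aim in each case is a bound of the form $C2^{(2-\frac{d}{q})j}d_j\times(\cdots)$. The time exponents are split by H\"older's inequality: in $T_uv$ and $R(u,v)$ we put $u$ in $L^\infty_T$ and $v$ in $L^2_T$, and in $T_vu$ we put $v$ in $L^\infty_T$ and $u$ in $L^2_T$. One could instead try to invoke Lemma \ref{product estimates in Chemin-Lerner spaces} directly, but its hypothesis $s_1<\frac{d}{p_1}$ fails when $u$ carries regularity $\frac{d}{p}$. The paraproduct $T_vu$ is exactly where this happens, so a direct paraproduct argument is needed there.

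For $T_uv$ we first assume $p\geq q$, and we take $r$ with $\frac1r=\frac1q-\frac1p$ when $p>q$. Bernstein gives $\|S_{j'-1}u\|_{L^\infty_T(L^{\infty})}\le C\sum_{k\le j'-2}2^{k}\,2^{(-1+\frac dp)k}\|\Delta_k u\|_{L^\infty_T(L^p)}$, and this geometric sum converges because the exponent is $1>0$. It remains to pair with $\|\Delta_{j'}v\|_{L^2_T(L^q)}$. When $p<q$ we use instead $\|S_{j'-1}u\|_{L^{r}}$ with $\frac1q=\frac1p+\frac1r$, and the low-frequency sum carries the exponent $2^{(-1+\frac dp-\frac dq+\frac dq)k}$ only through Bernstein from $L^p$ to $L^r$. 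The needed positivity is $1-\frac dp+\frac dp-\ldots$, so in effect the condition is that $-1+\frac dp-(\frac dp-\frac dr)$ be negative or handled with the right sign. I expect the hypothesis $\frac1q-\frac1p<\frac3d$, together with the restriction $p<d$ used elsewhere, to be precisely what makes the low-frequency sums converge.

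For $T_vu$ we bound $\|S_{j'-1}v\|_{L^\infty_T(L^{a})}$, where $\frac1q=\frac1a+\frac1p$ if $p>q$, and otherwise we go to $L^\infty$ and use $L^q$ for the product by moving the $L^p$ integrability. The sum is $\sum_{k\le j'}2^{(2-\frac dq+\frac d q-\frac d a)k}\cdot 2^{(-2+\frac dq)k}\|\Delta_kv\|$, whose exponent $2-\frac{d}{a}$ must be positive. This term is where the condition $\frac1q-\frac1p<\frac{3}{d}$ (or its analogue) should enter, and I will adjust the choice of $a$ accordingly. The high-frequency factor $\|\Delta_{j'}u\|_{L^2_T(L^p)}\le 2^{-\frac dp j'}d_{j'}\|u\|_{\widetilde L^2_T(\dot B^{d/p}_{p,2})}$ then gives the required power of $2^{j}$ after summing over $|j-j'|\le4$.

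The remainder $R(u,v)$ is the main obstacle. Its output frequency satisfies $j\le j'+N_0$, so we need total regularity $(-1+\frac dp)+(-1+\frac dq)$ to exceed $d\max(0,\frac1p+\frac1q-1)$. We estimate $\Delta_jR$ in $L^{m}$ with $\frac1m=\frac1p+\frac1q$ if $m\geq1$, and then use Bernstein to pass from $L^{m}$ to $L^{q}$ with gain $2^{\frac dp j}$. We then sum over $j'\ge j-N_0$ with the weight $2^{(\frac dp+\frac dq-2)(j-j')}$. This sum converges exactly when $\frac1p+\frac1q>\frac2d$, which is the stated hypothesis. If $\frac1p+\frac1q>1$, we go to $L^1$ and Bernstein into $L^q$ instead, and the same exponent appears. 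The resulting Young convolution in $j$ preserves $\ell^2$ and yields the term $\|u\|_{\widetilde L^\infty_T}\|v\|_{\widetilde L^2_T}$. Combining the three estimates gives \eqref{esimate of convective term of w}.
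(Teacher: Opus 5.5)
\textbf{The reduction in your first step loses the derivative, and the proof breaks at the $T_vu$ term.} Once you write $u\cdot\nabla v=\nabla\cdot(uv)$ and discard the divergence, you must bound $T_vu$ in $\dot{B}^{-2+\frac dq}_{q,2}$ with $v$ in the low-frequency slot. Your own computation gives the low-frequency exponent $2-\frac da$, and H\"{o}lder forces $\frac1a\ge\frac1q-\frac1p$. Choosing another intermediate Lebesgue exponent and applying Bernstein at the output frequency $2^j$ does not change this count. So this step needs $\frac1q-\frac1p<\frac2d$. That is strictly stronger than the hypothesis $\frac1q-\frac1p<\frac3d$, and no choice of $a$ repairs it.

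In fact the inequality you reduce to is false in that regime. Take $d=4$, $q=1$, $p=3$, which is admissible both for the lemma and for Theorem \ref{main result in of NSNPP}. Let $v$ be a fixed function with Fourier support in the unit annulus, and $u$ a unit-size divergence-free wave packet at frequency $N$, both constant in time on $[0,1]$. Then $\|uv\|_{\dot{B}^{2}_{1,2}}\sim N^{2}$, while your right-hand side is $O(N^{4/3})$. The lemma's own left-hand side $\|u\cdot\nabla v\|_{\dot{B}^{1}_{1,2}}$ is only $O(N)$, so the estimate you need is true; your reduction is what fails.

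\textbf{The fix} is to keep the derivative on the low-frequency factor. The paper does this by decomposing $u\cdot\nabla v=T_u\nabla v+T_{\nabla v}u+R(u,\nabla v)$ directly. For divergence-free $u$ this amounts to using $\nabla\cdot(T_vu)=T_{\nabla v}\cdot u$ instead of dropping the divergence. Estimate $T_{\nabla v}u$ in $\dot{B}^{-3+\frac dq}_{q,2}$.
\begin{itemize}
\item For $q\le p$, take $\frac1a=\frac1q-\frac1p$. The low-frequency sum becomes $\sum_{k\le j'-2}2^{(3+\frac dp-\frac dq)k}2^{(-2+\frac dq)k}\|\Delta_kv\|_{L^\infty_T(L^q)}$, which converges precisely when $\frac1q-\frac1p<\frac3d$.
\item For $p<q$, put $S_{j'-1}\nabla v$ in $L^\infty$ and use Bernstein from $L^p$ to $L^q$ at frequency $2^j$.
\end{itemize}

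The rest of your plan is sound. Your $L^\infty$ bound for $S_{j'-1}u$ already handles $T_uv$ for every $p$. The exponent $r$ with $\frac1q=\frac1p+\frac1r$ that you invoke for $p<q$ does not exist, but it is not needed. Your remainder estimate is correct: summability comes from $\frac1p+\frac1q>\frac2d$, or from $d>2$ when $\frac1p+\frac1q>1$.
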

\begin{proof}
Thanks to Bony's paraproduct decomposition \eqref{Bony decomposition}, we have
\begin{equation*}
   u\cdot\nabla v=T_{u}\nabla v+T_{\nabla v}u+R(u,\nabla v).
\end{equation*}
We first estimate the term $T_{u}\nabla v$.  Applying Lemma \ref{Bernstein inequality} gives us to
\begin{align}\label{convective term1}
  \|\Delta_{j}(T_{u}\nabla v)\|_{L^2_T(L^{q})}
  &\leq C  \sum_{|j'-j|\le4}2^{j'}\|S_{j'-1}u\|_{L^\infty_T(L^{\infty})}\|\Delta_{j'}v\|_{L^2_T(L^{q})}\nonumber\\
  &\leq C  \sum_{|j'-j|\le4}2^{j'}\sum_{k\le j'-2}2^{\frac{d}{p}k}\|\Delta_{k}u\|_{L^\infty_T(L^{p})}\|\Delta_{j'}v\|_{L^2_T(L^{q})}
  \nonumber\\
   &\leq C  \sum_{|j'-j|\le4}2^{2j'}\|\Delta_{j'}v\|_{L^2_T(L^{q})}\|u\|_{\widetilde{L}^\infty_T(\dot{B}^{-1+\frac{d}{p}}_{p,2})}\nonumber\\
   &\leq C  2^{(3-\frac{d}{q})j}d_{j}\|v\|_{\widetilde{L}^2_T(\dot{B}^{-1+\frac{d}{q}}_{q,2})}
 \|u\|_{\widetilde{L}^\infty_T(\dot{B}^{-1+\frac{d}{p}}_{p,2})}.
\end{align}
Next, for the term $T_{\nabla v}u$,  we treat it in the following two cases. In the case $1\leq q\leq p$,  we have
\begin{align}\label{convective term21}
  \|\Delta_{j}(T_{\nabla v}u)\|_{L^2_T(L^{q})}&\leq C
  \sum_{|j'-j|\le4}\|\Delta_{j}S_{j'-1}\nabla v\|_{L^\infty_T(L^{\frac{pq}{p-q}})}\|\Delta_{j'}u\|_{L^2_T(L^{p})}\nonumber\\
  &\leq C  \sum_{|j'-j|\le4}\sum_{k\le j'-2}2^{(1+\frac{d}{p})k}\|\Delta_{k} v\|_{L^\infty_T(L^{q})}
  \|\Delta_{j'}u\|_{L^2_T(L^{p})}
  \nonumber\\
   &\leq C  \sum_{|j'-j|\le4}\sum_{k\le j'-2}2^{(3+\frac{d}{p}-\frac{d}{q})k}2^{(-2+\frac{d}{q})k}\|\Delta_{k} v\|_{L^\infty_T(L^{q})}
  \|\Delta_{j'}u\|_{L^2_T(L^{p})}
   \nonumber\\
  &\leq C  2^{(3-\frac{d}{q})j}d_{j}\|v\|_{\widetilde{L}^\infty_T(\dot{B}^{-2+\frac{d}{q}}_{q,2})}
 \|u\|_{\widetilde{L}^2_T(\dot{B}^{\frac{d}{p}}_{p,2})};
\end{align}
while in the case  $1\leq p<q$,  we have
\begin{align}\label{convective term22}
  \|\Delta_{j}(T_{\nabla v}u)\|_{L^2_T(L^{q})}&\leq C  2^{d(\frac{1}{p}-\frac{1}{q})j}
  \sum_{|j'-j|\le4}\|S_{j'-1}\nabla v\Delta_{j'}u\|_{L^2_T(L^{p})}\nonumber\\
  &\leq C  2^{d(\frac{1}{p}-\frac{1}{q})j}\sum_{|j'-j|\le4}\sum_{k\le j'-2}2^{(1+\frac{d}{q})k}\|\Delta_{k} v\|_{L^\infty_T(L^{q})}
  \|\Delta_{j'}u\|_{L^2_T(L^{p})}
  \nonumber\\
   &\leq C  2^{d(\frac{1}{p}-\frac{1}{q})j}\sum_{|j'-j|\le4}\sum_{k\le j'-2}2^{3k}2^{(-2+\frac{d}{q})k}\|\Delta_{k} v\|_{L^\infty_T(L^{q})}
  \|\Delta_{j'}u\|_{L^2_T(L^{p})}
  \nonumber\\
  &\leq C  2^{(3-\frac{d}{q})j} d_{j}
  \|v\|_{\widetilde{L}^\infty_T(\dot{B}^{-2+\frac{d}{q}}_{q,2})}
 \|u\|_{\widetilde{L}^2_T(\dot{B}^{\frac{d}{p}}_{p,2})}.
\end{align}
Finally, we deal with the remaining term $R(u,\nabla v)$. In the case that $\frac{1}{p}+\frac{1}{q}\leq
1$, the condition $\frac{1}{p}+\frac{1}{q}> \frac{2}{d}$ implies that
\begin{align}\label{convective term31}
  \|\Delta_{j}R(u,\nabla v)\|_{L^2_T(L^{q})}
  &\leq C  2^{(1+\frac{d}{p})j}\sum_{j'\ge j-N_{0}}\|\Delta_{j'}u\|_{L^2_T(L^{p})}\|\widetilde{\Delta}_{j'}v\|_{L^\infty_T(L^{q})}\nonumber\\
  &\leq C  2^{(1+\frac{d}{p})j}\sum_{j'\ge j-N_{0}}2^{(2-\frac{d}{p}-\frac{d}{q})j'}2^{\frac{d}{p}j'}\|\Delta_{j'}u\|_{L^2_T(L^{p})}
  2^{(-2+\frac{d}{q})j'}\|\widetilde{\Delta}_{j'}v\|_{L^\infty_T(L^{q})}\nonumber\\
  &\leq C  2^{(3-\frac{d}{q})j}d_{j} \|v\|_{\widetilde{L}^\infty_T(\dot{B}^{-2+\frac{d}{q}}_{q,2})}
 \|u\|_{\widetilde{L}^2_T(\dot{B}^{\frac{d}{p}}_{p,2})}.
\end{align}
In the case that $\frac{1}{p}+\frac{1}{q}>1$, we find
$1<q'\le\infty$ such that $\frac{1}{q}+\frac{1}{q'}=1$, then
\begin{align}\label{convective term32}
  \|\Delta_{j}R(u,\nabla v)\|_{L^2_T(L^{q})}&\leq C
  2^{j+d(1-\frac{1}{q})j}\sum_{j'\ge j-N_{0}}\|\Delta_{j'}u\widetilde{\Delta}_{j'}v\|_{L^2_T(L^{1})}\nonumber\\
  &\leq C  2^{j+d(1-\frac{1}{q})j}\sum_{j'\ge j-N_{0}}\|\Delta_{j'}u\|_{L^2_T(L^{q'})}\|\widetilde{\Delta}_{j'}v\|_{L^\infty_T(L^{q})}\nonumber\\
  &\leq C  2^{j+d(1-\frac{1}{q})j}\sum_{j'\ge j-N_{0}}2^{-(d-2)j'}2^{\frac{d}{p}j'}\|\Delta_{j'}u\|_{L^2_T(L^{p})}
  2^{(-2+\frac{d}{q})j'}\|\widetilde{\Delta}_{j'}v\|_{L^\infty_T(L^{q})}\nonumber\\
  &\leq C  2^{(3-\frac{d}{q})j}d_{j}\|v\|_{\widetilde{L}^\infty_T(\dot{B}^{-2+\frac{d}{q}}_{q,2})}
 \|u\|_{\widetilde{L}^2_T(\dot{B}^{\frac{d}{p}}_{p,2})}.
\end{align}
Putting \eqref{convective term1}--\eqref{convective term32} together, one gets \eqref{esimate of convective term of w}. We complete the proof of Lemma \ref{convective term of w}.
\end{proof}

\begin{lemma}\label{drift term of v}
Let $1\leq q<d$. Then there exists a positive constant $C$ such that
\begin{align}\label{drift term of v1}
   \|w\nabla(-\Delta)^{-1}v\|_{\widetilde{L}^{2}_{T}(\dot{B}^{-2+\frac{d}{q}}_{q,2})}\leq C\Big(
   \|w\|_{\widetilde{L}^{\infty}_{T}(\dot{B}^{-2+\frac{d}{q}}_{q,2})}
   \|v\|_{\widetilde{L}^{2}_{T}(\dot{B}^{-1+\frac{d}{q}}_{q,2})}+\|v\|_{\widetilde{L}^{\infty}_{T}(\dot{B}^{-2+\frac{d}{q}}_{q,2})}
   \|w\|_{\widetilde{L}^{2}_{T}(\dot{B}^{-1+\frac{d}{q}}_{q,2})}\Big).
\end{align}
\end{lemma}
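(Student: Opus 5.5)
The cleanest route is a direct application of the Chemin--Lerner product estimate, Lemma \ref{product estimates in Chemin-Lerner spaces}, with $p_1=p_2=q$. No paraproduct decomposition should be needed, because both factors live in the same Lebesgue scale $L^q$. We can argue term by term, since $\nabla(-\Delta)^{-1}$ is a homogeneous Fourier multiplier of order $-1$. Lemma \ref{Properties} i)--ii), applied blockwise to each $\Delta_j$ and hence also valid in Chemin--Lerner norms, then gives
\begin{equation*}
\|\nabla(-\Delta)^{-1}v\|_{\widetilde{L}^{\rho}_{T}(\dot{B}^{s+1}_{q,2})}\leq C\|v\|_{\widetilde{L}^{\rho}_{T}(\dot{B}^{s}_{q,2})}.
\end{equation*}

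We treat the product $w\,\nabla(-\Delta)^{-1}v$ in two configurations and take the better of the two.

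For the first term on the right of \eqref{drift term of v1}, we put $w$ in $\widetilde{L}^\infty_T(\dot{B}^{-2+\frac dq}_{q,2})$ and $\nabla(-\Delta)^{-1}v$ in $\widetilde{L}^2_T(\dot{B}^{\frac dq}_{q,2})$. Unfortunately, $s_2=d/q$ violates the strict requirement $s_2<d/q$ of the product lemma. So we swap the roles: take $f=\nabla(-\Delta)^{-1}v$ with $s_1=\frac dq$ and $p_1=q$? That also fails, since $s_1<d/p_1$ is required. This endpoint problem is the main obstacle.

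Instead we use the splitting $s_1=-1+\frac dq$, $s_2=-1+\frac dq$, obtained by redistributing regularity through interpolation \eqref{interpolation in Chemin-Lerner spaces}. Let $f=w\in\widetilde{L}^{4}_T(\dot{B}^{-\frac32+\frac dq}_{q,2})$ and $g=\nabla(-\Delta)^{-1}v\in\widetilde{L}^{4}_T(\dot{B}^{-\frac12+\frac dq}_{q,2})$. Then $s_1=-\frac32+\frac dq<\frac dq$ and $s_2=-\frac12+\frac dq<\frac dq$. The sum is $s_1+s_2=-2+\frac{2d}{q}$, which exceeds $d\max(0,\frac2q-1)$ precisely when $q<d$; this is where the hypothesis $1\le q<d$ enters. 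The output regularity is $s_1+s_2-\frac dq=-2+\frac dq$, as desired, and the time exponents satisfy $\frac14+\frac14=\frac12$. Lemma \ref{product estimates in Chemin-Lerner spaces} therefore yields
\begin{equation*}
\|w\nabla(-\Delta)^{-1}v\|_{\widetilde{L}^{2}_{T}(\dot{B}^{-2+\frac{d}{q}}_{q,2})}\leq C\|w\|_{\widetilde{L}^{4}_{T}(\dot{B}^{-\frac32+\frac{d}{q}}_{q,2})}\|v\|_{\widetilde{L}^{4}_{T}(\dot{B}^{-\frac32+\frac{d}{q}}_{q,2})}.
\end{equation*}

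Now we interpolate each factor between $\widetilde{L}^\infty_T(\dot{B}^{-2+\frac dq}_{q,2})$ and $\widetilde{L}^2_T(\dot{B}^{-1+\frac dq}_{q,2})$ with $\theta=\frac12$, using \eqref{interpolation in Chemin-Lerner spaces} (the time exponents satisfy $\frac14=\frac1\infty\cdot\frac12+\frac12\cdot\frac12$ in the H\"older sense). This gives
\begin{equation*}
\|w\|_{\widetilde{L}^{4}_{T}(\dot{B}^{-\frac32+\frac{d}{q}}_{q,2})}\|v\|_{\widetilde{L}^{4}_{T}(\dot{B}^{-\frac32+\frac{d}{q}}_{q,2})}\le C\big(AB'\big)^{\frac12}\big(A'B\big)^{\frac12},
\end{equation*}
where $A,A'$ denote the $\widetilde{L}^\infty_T(\dot{B}^{-2+\frac dq}_{q,2})$ and $\widetilde{L}^2_T(\dot{B}^{-1+\frac dq}_{q,2})$ norms of $w$, and $B,B'$ the corresponding norms of $v$. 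Young's inequality $\sqrt{xy}\le x+y$ then produces exactly the right-hand side of \eqref{drift term of v1}.

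The only delicate point is verifying the strict endpoint conditions of Lemma \ref{product estimates in Chemin-Lerner spaces}, which holds because the critical regularity is split evenly between the two factors. If one prefers to avoid the $\widetilde{L}^4$ route, a Bony decomposition argument as in Lemma \ref{external force in NS} with $p=q$ would also work. In that approach the remainder term requires $-2+\frac{2d}{q}>0$, which is again the condition $q<d$.
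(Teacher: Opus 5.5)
Your argument is correct, but it takes a different route from the paper. The paper proves this lemma by hand. It applies Bony's decomposition to $w\nabla(-\Delta)^{-1}v$ and uses Bernstein's inequality on each piece:
\begin{itemize}
\item $T_{w}\nabla(-\Delta)^{-1}v$ and the remainder are bounded by $\|w\|_{\widetilde{L}^{\infty}_{T}(\dot{B}^{-2+\frac{d}{q}}_{q,2})}\|v\|_{\widetilde{L}^{2}_{T}(\dot{B}^{-1+\frac{d}{q}}_{q,2})}$;
\item $T_{\nabla(-\Delta)^{-1}v}w$ is bounded by the other product;
\item the remainder is treated separately for $1\le q<2$ (using $d>2$) and for $2\le q<d$ (using $q<d$).
\end{itemize}
You instead reduce everything to the Chemin--Lerner product lemma with the balanced $\widetilde{L}^{4}_{T}$ splitting, followed by interpolation and Young. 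This is the same mechanism the paper uses for $u\cdot\nabla u$ and for $\nabla\cdot(v\nabla(-\Delta)^{-1}v)$. Your index checks ($s_1=-\frac32+\frac dq$, $s_2=-\frac12+\frac dq$, output $-2+\frac dq$, $\frac14+\frac14=\frac12$) and the interpolation step are right. Your version is shorter and modular, but it is only as solid as the product lemma. The paper merely asserts that lemma's $\ell^2$ version (it does not follow from the $\ell^1$ version by H\"older alone). The paper's explicit paraproduct computation is self-contained and shows exactly where each hypothesis enters.

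There are two small slips. For $1\le q<2$, the condition $s_1+s_2>d(\frac2q-1)$ reduces to $d>2$, not to $q<d$; this is harmless under the standing assumption $d\ge3$. Also, the split you announce, $s_1=s_2=-1+\frac dq$, is not the one you then use.

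In fact, the announced split works on its own. Pair $w\in\widetilde{L}^{2}_{T}(\dot{B}^{-1+\frac{d}{q}}_{q,2})$ with $\nabla(-\Delta)^{-1}v\in\widetilde{L}^{\infty}_{T}(\dot{B}^{-1+\frac{d}{q}}_{q,2})$. This satisfies all hypotheses of the product lemma; the endpoint obstruction you found only affects the opposite pairing. It bounds the product by $\|w\|_{\widetilde{L}^{2}_{T}(\dot{B}^{-1+\frac{d}{q}}_{q,2})}\|v\|_{\widetilde{L}^{\infty}_{T}(\dot{B}^{-2+\frac{d}{q}}_{q,2})}$ alone, with no interpolation needed. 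This one-term form is exactly what the paper itself uses, pointwise in time, in Step 2.
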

\begin{proof}
Thanks to Bony's paraproduct decomposition \eqref{Bony decomposition}, we obtain
\begin{equation*}
   w\nabla(-\Delta)^{-1}v=T_{w}\nabla(-\Delta)^{-1}v+T_{\nabla(-\Delta)^{-1}v}w+R(w,\nabla(-\Delta)^{-1}v).
\end{equation*}
Applying Lemma  \ref{Bernstein inequality}  and Lemma \ref{Properties}, we obtain
\begin{align}\label{drift term of v2}
  \|\Delta_{j}(T_{w}\nabla(-\Delta)^{-1}v)\|_{L^2_{T}(L^{q})}
  &\leq C \sum_{|j'-j|\le4}\|S_{j'-1}w\|_{L^\infty_T(L^{\infty})}\|\Delta_{j'}\nabla(-\Delta)^{-1}v\|_{L^2_T(L^{q})}\nonumber\\
  &\leq C \sum_{|j'-j|\le4}2^{-j'}\sum_{k\leq j'-2}2^{\frac{d}{q}k}\|\Delta_{k}w\|_{L^\infty_T(L^{q})}\|\Delta_{j'}v\|_{L^2_T(L^{q})}\nonumber\\
  &\leq C \sum_{|j'-j|\le4}2^{-j'}\sum_{k\leq j'-2}2^{2k}2^{(-2+\frac{d}{q})k}
  \|\Delta_{k}w\|_{L^\infty_T(L^{q})}
  \|\Delta_{j'}v\|_{L^2_T(L^{q})}\nonumber\\
  &\leq C 2^{(2-\frac{d}{q})j}d_{j}\|w\|_{\widetilde{L}^\infty_T(\dot{B}^{-2+\frac{d}{q}}_{q,2})}\|v\|_{\widetilde{L}^2_T(\dot{B}^{-1+\frac{d}{q}}_{q,2})},
\end{align}
\begin{align}\label{drift term of v3}
  \|\Delta_{j}(T_{\nabla(-\Delta)^{-1}v}w)\|_{L^2_T(L^{q})}
  &\leq C \sum_{|j'-j|\le4}\|S_{j'-1}\nabla(-\Delta)^{-1}v\|_{L^\infty_T(L^{\infty})}\|\Delta_{j'}w\|_{L^2_T(L^{q})}\nonumber\\
  &\leq C  \sum_{|j'-j|\le4}\sum_{k\leq j'-2}2^{(-1+\frac{d}{q})k}\|\Delta_{k}v\|_{L^\infty_T(L^{q})}\|\Delta_{j'}w\|_{L^2_T(L^{q})}\nonumber\\
  &\leq C  \sum_{|j'-j|\le4}\sum_{k\leq j'-2}2^{k}2^{(-2+\frac{d}{q})k}
  \|\Delta_{k}v\|_{L^\infty_T(L^{q})}
  \|\Delta_{j'}w\|_{L^2_T(L^{q})}\nonumber\\
  &\leq C  2^{(2-\frac{d}{q})j}d_{j}\|v\|_{\widetilde{L}^\infty_T(\dot{B}^{-2+\frac{d}{q}}_{q,2})}\|w\|_{\widetilde{L}^2_T(\dot{B}^{-1+\frac{d}{q}}_{q,2})}.
\end{align}
To bound the remaining term $R(w,\nabla(-\Delta)^{-1}v)$, in the case $1\leq q<2$, there exists $2<q'\le\infty$ such that
$\frac{1}{q}+\frac{1}{q'}=1$, and  using Lemma \ref{Bernstein inequality}, one gets
\begin{align}\label{drift term of v41}
  \|\Delta_{j}R(w,\nabla(-\Delta)^{-1}v)\|_{L^2_T(L^{q})}&\leq C
  2^{d(1-\frac{1}{q})j}\sum_{j'\ge j-N_{0}}\|\Delta_{j'}w\widetilde{\Delta}_{j'}\nabla(-\Delta)^{-1}v\|_{L^2_T(L^{1})}\nonumber\\
  &\leq C  2^{d(1-\frac{1}{q})j}\sum_{j'\ge j-N_{0}}\|\Delta_{j'}w\|_{L^\infty_T(L^{q'})}
 2^{-j'} \|\widetilde{\Delta}_{j'}v\|_{L^2_T(L^{q})}\nonumber\\
  &\leq C  2^{d(1-\frac{1}{q})j}\sum_{j'\ge j-N_{0}}2^{-(d-2)j'}2^{(-2+\frac{d}{q})j'}\|\Delta_{j'}w\|_{L^\infty_T(L^{q})}
  2^{(-1+\frac{d}{q})j'}\|\widetilde{\Delta}_{j'}v\|_{L^2_T(L^{q})}\nonumber\\
  &\leq C  2^{(2-\frac{d}{q})j}d_{j}\|w\|_{\widetilde{L}^\infty_T(\dot{B}^{-2+\frac{d}{q}}_{q,2})}\|v\|_{\widetilde{L}^2_T(\dot{B}^{-1+\frac{d}{q}}_{q,2})};
\end{align}
while in the case $2\leq
q<d$, one can treat it as
\begin{align}\label{drift term of v42}
  \|\Delta_{j}R(w,\nabla(-\Delta)^{-1}v)\|_{L^2_T(L^{q})}&\leq C
  2^{\frac{d}{q}j}\sum_{j'\ge j-N_{0}}\|\Delta_{j'}w\widetilde{\Delta}_{j'}\nabla(-\Delta)^{-1}v\|_{L^2_T(L^{\frac{q}{2}})}\nonumber\\
  &\leq C  2^{\frac{d}{q}j}\sum_{j'\ge j-N_{0}}\|\Delta_{j'}w\|_{L^\infty_T(L^{q})}2^{-j'}
  \|\widetilde{\Delta}_{j'}v\|_{L^2_T(L^{q})}\nonumber\\
  &\leq C  2^{\frac{d}{q}j}\sum_{j'\ge j-N_{0}}2^{(2-\frac{2d}{q})j'}2^{(-2+\frac{d}{q})j'}\|\Delta_{j'}w\|_{L^\infty_T(L^{q})}
  2^{(-1+\frac{d}{q})j'}\|\widetilde{\Delta}_{j'}v\|_{L^2_T(L^{q})}\nonumber\\
  &\leq C  2^{(2-\frac{d}{q})j}d_{j}\|w\|_{\widetilde{L}^\infty_T(\dot{B}^{-2+\frac{d}{q}}_{q,2})}\|v\|_{\widetilde{L}^2_T(\dot{B}^{-1+\frac{d}{q}}_{q,2})}.
\end{align}
Putting \eqref{drift term of v2}--\eqref{drift term of v42} together, one gets \eqref{drift term of v1}. We complete that the proof of
Lemma \ref{drift term of v}.
\end{proof}

\begin{lemma}\label{drift term of w}
Let $1\leq q<2d$.  Then there exists a positive constant $C$ such that
\begin{align}\label{drift term of w1}
\|\nabla\cdot(v\nabla(-\Delta)^{-1}v)\|_{\widetilde{L}^{2}_{T}(\dot{B}^{-3+\frac{d}{q}}_{q,2})}\leq C
   \|v\|_{\widetilde{L}^{\infty}_{T}(\dot{B}^{-2+\frac{d}{q}}_{q,2})}
   \|v\|_{\widetilde{L}^{2}_{T}(\dot{B}^{-1+\frac{d}{q}}_{q,2})}.
\end{align}
\end{lemma}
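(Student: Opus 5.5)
We first use Lemma \ref{Properties} i) to remove the divergence: the derivative costs one regularity index, so
\begin{equation*}
\|\nabla\cdot(v\nabla(-\Delta)^{-1}v)\|_{\widetilde{L}^{2}_{T}(\dot{B}^{-3+\frac{d}{q}}_{q,2})}\leq C\|v\nabla(-\Delta)^{-1}v\|_{\widetilde{L}^{2}_{T}(\dot{B}^{-2+\frac{d}{q}}_{q,2})}.
\end{equation*}
This reduces the claim to a bound on the product $v\nabla(-\Delta)^{-1}v$ in $\widetilde{L}^{2}_{T}(\dot{B}^{-2+\frac{d}{q}}_{q,2})$. The reduced estimate is exactly Lemma \ref{external force in NS} in the diagonal case $p=q$, which falls under Case 1 there. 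However, that lemma requires $q<d$, while here we only assume $q<2d$, so we would rather argue directly.

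\textbf{Step 1 (symmetric structure).} We use the identity \eqref{symmetric structure of v}, writing $a:=\nabla(-\Delta)^{-1}v$:
\begin{equation*}
v\nabla(-\Delta)^{-1}v=\nabla\cdot\big(a\otimes a-\tfrac12|a|^2I\big).
\end{equation*}
Combining this with the divergence already present, we obtain
\begin{equation*}
\|\nabla\cdot(v\nabla(-\Delta)^{-1}v)\|_{\widetilde{L}^{2}_{T}(\dot{B}^{-3+\frac{d}{q}}_{q,2})}\leq C\|a\otimes a\|_{\widetilde{L}^{2}_{T}(\dot{B}^{-1+\frac{d}{q}}_{q,2})}.
\end{equation*}
By Lemma \ref{Properties} ii), $a$ behaves like a velocity field: $a\in\widetilde{L}^\infty_T(\dot{B}^{-1+\frac{d}{q}}_{q,2})\cap\widetilde{L}^2_T(\dot{B}^{\frac{d}{q}}_{q,2})$, with norms bounded by those of $v$ in $\widetilde{L}^\infty_T(\dot{B}^{-2+\frac{d}{q}}_{q,2})$ and $\widetilde{L}^2_T(\dot{B}^{-1+\frac{d}{q}}_{q,2})$ respectively.

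\textbf{Step 2 (product and interpolation).} We repeat the argument of Lemma \ref{convective term in NS} with $p$ replaced by $q$, which is allowed since $q<2d$. Apply Lemma \ref{product estimates in Chemin-Lerner spaces} with $p_1=p_2=q$, $s_1=s_2=-\frac12+\frac dq$ and $\rho_1=\rho_2=4$. This gives
\begin{equation*}
\|a\otimes a\|_{\widetilde{L}^{2}_{T}(\dot{B}^{-1+\frac{d}{q}}_{q,2})}\leq C\|a\|^2_{\widetilde{L}^{4}_{T}(\dot{B}^{-\frac12+\frac{d}{q}}_{q,2})}.
\end{equation*}
Then apply the interpolation \eqref{interpolation in Chemin-Lerner spaces} with $\theta=\frac12$ between $\widetilde{L}^\infty_T(\dot{B}^{-1+\frac dq}_{q,2})$ and $\widetilde{L}^2_T(\dot{B}^{\frac dq}_{q,2})$. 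Combined with Step 1, this yields \eqref{drift term of w1}.

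\textbf{Main obstacle.} The delicate point is checking the hypotheses of the product lemma. We need $s_1=s_2=\frac dq-\frac12<\frac dq$, which holds. We also need $s_1+s_2=\frac{2d}{q}-1>d\max(0,\frac2q-1)$. The part $\frac{2d}q-1>0$ is exactly $q<2d$. When $q<2$, we need additionally $\frac{2d}{q}-1>\frac{2d}{q}-d$, i.e.\ $d>1$, which holds since $d\geq 3$. If any endpoint issue arose for small $q$, we would fall back on a direct Bony decomposition as in the proof of Lemma \ref{drift term of v}, using the duality trick through $L^1$ for the remainder term.
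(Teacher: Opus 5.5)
Your proposal is correct and follows essentially the same route as the paper: you remove the divergence, use the symmetric structure \eqref{symmetric structure of v}, and apply the Chemin--Lerner product estimate into $\widetilde{L}^{4}_{T}(\dot{B}^{-\frac12+\frac dq}_{q,2})$; your check of its hypotheses is right ($q<2d$ gives $s_1+s_2>0$, and $d>1$ covers the case $q<2$). You then interpolate between $\widetilde{L}^{\infty}_{T}$ and $\widetilde{L}^{2}_{T}$ as the paper does, the only cosmetic difference being that you interpolate on $a=\nabla(-\Delta)^{-1}v$ directly rather than first transferring the $\widetilde{L}^{4}_{T}$ norm to $v$ via Lemma \ref{Properties}, which is equivalent; your fallback remark about endpoint issues for small $q$ is unnecessary.
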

\begin{proof}
By using Lemma \ref{Properties}, \eqref{symmetric structure of v} and Lemma \ref{product estimates in Chemin-Lerner spaces}, we see that
\begin{align*}
\|\nabla\cdot(v\nabla(-\Delta)^{-1}v)\|_{\widetilde{L}^{2}_{T}(\dot{B}^{-3+\frac{d}{q}}_{q,2})}&\leq C
\|v\nabla(-\Delta)^{-1}v\|_{\widetilde{L}^{2}_{T}(\dot{B}^{-2+\frac{d}{q}}_{q,2})}\nonumber\\
&\leq C
\|\nabla(-\Delta)^{-1}v\otimes\nabla(-\Delta)^{-1}v-\frac{1}{2}\left|\nabla(-\Delta)^{-1}v\right|^2 I\|_{\widetilde{L}^{2}_{T}(\dot{B}^{-1+\frac{d}{q}}_{q,2})} \nonumber\\
&\leq C\|\nabla(-\Delta)^{-1}v\|_{\widetilde{L}^{4}_{T}(\dot{B}^{-\frac{1}{2}+\frac{d}{q}}_{q,2})}^2
\nonumber\\
&\leq C\|v\|_{\widetilde{L}^{4}_{T}(\dot{B}^{-\frac{3}{2}+\frac{d}{q}}_{q,2})}^2
\nonumber\\
&\leq C
   \|v\|_{\widetilde{L}^{\infty}_{T}(\dot{B}^{-2+\frac{d}{q}}_{q,2})}
   \|v\|_{\widetilde{L}^{2}_{T}(\dot{B}^{-1+\frac{d}{q}}_{q,2})}.
\end{align*}
We complete the proof of
Lemma \ref{drift term of w}.
\end{proof}
\smallbreak

Based on Lemma \ref{convective term in NS}--Lemma \ref{drift term of w}, we can exactly follow the same argument as \cite{ZZL15} to get that there exists $T>0$ such that the system \eqref{tranformed NSNPP} admits a unique solution $(u,v,w)$ on $[0,T]$ satisfying
\begin{align*}
  u\in\widetilde{L}^{\infty}(0, T;
 \dot{B}^{-1+\frac{d}{p}}_{p,2})\cap \widetilde{L}^{2}(0, T;
  \dot{B}^{\frac{d}{p}}_{p,2}),\ \ \ \
  v,w\in  \widetilde{L}^{\infty}(0, T;
  \dot{B}^{-2+\frac{d}{q}}_{q,2})\cap \widetilde{L}^{2}(0, T;
\dot{B}^{-1+\frac{d}{q}}_{q,2}).
\end{align*}
Moveover, if the initial data is small enough, then the solution
$(u, v, w)$ is global.   We complete the proof of Theorem \ref{well-posedness in Besov spaces}.
\vskip .3in

\noindent \textbf{Acknowledgements}
 The authors declared that they have no conflict of interest. This work is partially supported by the National Natural Science Foundation of China (no. 12361034) and the Natural Science Foundation of Shaanxi Province (no. 2026JC-YBMS-0014).
\vskip.2cm

\noindent \textbf{Data Availability Statement} No data was used for the research described in the article.


\end{document}